\newtheorem{lem}{Lemma}
\newcommand\numberthis{\addtocounter{equation}{1}\tag{\theequation}}
\DeclareMathOperator{\Pf}{Pf}
\DeclareMathOperator{\sgn}{sgn}
\newtheorem{thm}{Theorem}
\newtheorem{cor}{Corollary}
\newtheorem{conj}{Conjecture}
\begin{document}
 \title[Small gaps]{Small gaps of GOE}

\author[Feng, Tian and Wei]{Renjie Feng, Gang Tian and Dongyi Wei}
\address{Beijing International Center for Mathematical Research, Peking University, Beijing, China, 100871.}

\email{renjie@math.pku.edu.cn}
\email{gtian@math.pku.edu.cn}
 \email{jnwdyi@pku.edu.cn}

\date{\today}
   \maketitle
    \begin{abstract}
In this article, we study the smallest gaps of the Gaussian
orthogonal ensemble (GOE) with the joint density \eqref{6}.
The main result is that the smallest gaps, after normalized by $n$, will tend to a Poisson distribution, and the limiting density of the $k$-th normalized smallest gaps is  $ 2{}x^{2k-1}e^{-x^{2}}/(k-1)!$. \end{abstract}

\section{Introduction}
The problem regarding the spacings  of eigenvalues is one of the most important problems in random matrix theory. The gap probability of eigenvalues for the classical random matrices GOE, GUE, GSE and its universality for more general ensembles such as the Wigner matrices  are studied intensively and  pretty well-known \cite{AGZ, BEY, DG, ey,  Me, TV}.  There are also results on the single spacing of eigenvalues for the classical matrices and some universal ensembles \cite{ey, N, T, TV}. 
But there are only a few results regarding the extreme gaps. The motivations to study the extreme gaps of eigenvalues of random matrices come from many different areas such as conjectures regarding the extreme gaps for zeros of Riemann zeta function \cite{D, Mo}, quantum chaos \cite{BBB, BG} and quantum information theory \cite{STKZ}.  
 Now let's give a brief review of the existing results.

The way to derive the smallest gaps for the determinantal point processes is basically well established. The distributions of the smallest gaps of CUE and GUE were first obtained by Vinson  using the moment method \cite{V}. In \cite{So}, Soshinikov investigated the smallest gaps for any determinantal point process on the real line with a translation invariant kernel and proved that some Poisson distribution can be observed in the limit.  Then Ben Arous-Bourgade in \cite{BB}  applied Soshinikov's method to derive the joint density of the  smallest gaps of CUE and GUE, and they proved that the $k$-th smallest gaps of CUE and GUE, normalized by $n^{ {4}/{3}}$, have the limiting density  proportional to \begin{equation}\label{s2}x^{3k-1}e^{-x^{3}},\end{equation}
here, the joint density of GUE is 
 \begin{align}\label{j2}\frac{1}{Z_{n,2}}e^{-n\sum\limits_{i=1}^n\lambda_i^2}\prod_{1\leq i<j\leq n}|\lambda_i-\lambda_j|^2,\end{align}
 where $Z_{n,2}$ is the normalization constant.  Later on, Figalli-Guionnet  derived the smallest gaps for some invariant multimatrix Hermitian ensembles \cite{FG}. As a remark,  the determinantal structure is essential in the proofs in \cite{BB, FG, So, V}.


In \cite{FW},  we derived the smallest gaps of  eigenangles of C$\beta$E  beyond the determinantal  case for any positive integer $\beta$. For the two-dimensional point process 
 $$\chi^{(n)}=\sum_{i=1}^n\delta_{(n^{\frac {\beta+2}{\beta+1}}(\theta_{i+1}-\theta_i), \theta_i)},$$
we proved that $\chi^{(n)}$ tends   
to a Poisson point process $ \chi$ as $n\to\infty$ with intensity\begin{align*}&\mathbb{E}\chi(A\times I)= \frac{A_{\beta}|I|}{2\pi}\int_Au^{\beta}du,\end{align*} 
where $A\subset \mathbb R_+$ is any bounded Borel set, $I\subset (-\pi, \pi)$ is an interval, $|I|$ is the Lebesgue measure
of $I$ and \begin{equation}\label{abb}A_\beta=(2\pi)^{-1}\frac{(\beta/2)^\beta(\Gamma(\beta/2+1))^3}{\Gamma(3\beta/2+1)\Gamma(\beta+1)}.\end{equation}
 In particular,  the result holds for COE, CUE and CSE  with $$A_1=\frac 1{24},\,\,A_2=\frac 1{24\pi},\,\,A_4=\frac 1{270\pi}$$ correspondingly. 
 
 As a direct consequence, let's denote $t^n_{k,\beta}$ as the $k$-th smallest gap of C$\beta$E where $t_{1,\beta}^n<t_{2,\beta}^n<t_{3,\beta}^n\cdots$ and define $$ \tau^n_{k,\beta}=n^{(\beta+2)/(\beta+1)}\times (A_{\beta}/(\beta+1))^{{1}/({\beta+1})}t^n_{k,\beta},$$ then  we have the limiting density
\begin{equation}\label{sc}\lim_{n\to+\infty}\mathbb P(\tau^n_{k,\beta}\in A)= \int_A\frac{\beta+1}{(k-1)!} x^{k(\beta+1)-1}e^{-x^{\beta+1}}dx.\end{equation}
 For   general C$\beta$E,  there is  no determinantal  structure as CUE and the whole proof in \cite{FW} is based on the Selberg integral.

The decay order $\sqrt{32\log n}/n$ of the largest gaps of CUE and GUE was predicted by Vinson  in \cite{V}, and the proof is given by Ben Arous-Bourgade in \cite{BB}. The same decay order for the largest gaps of some invariant multimatrix Hermitian matrices is derived by Figalli-Guionnet in \cite{FG}. Recently, the fluctuations of the largest gaps of CUE and GUE are further derived in \cite{FW1}.

But there is no previous result on the extreme gaps for GOE. There are some essential difficulties for GOE compared with GUE. For GUE, it  is a determinantal point process so that one can express the point correlation functions explicitly and apply the 
Hadamard-Fisher inequality to control the estimates. This is not the case for GOE even though GOE has a Pfaffian structure. One can only express the point correlation functions of GOE as  integrals of the joint density. This causes many difficulties and all the proofs require delicate estimates of the integrals. 
In this paper, we will derive the smallest gaps of GOE and this is the first result regarding the extreme gaps of GOE. Our arguments follow the approach we developed in \cite{FW}.

For GOE, the joint density of the eigenvalues is \begin{align}\label{6}\frac{1}{G_n}e^{-\sum\limits_{i=1}^n\lambda_i^2/2}\prod_{1\leq i<j\leq n}|\lambda_i-\lambda_j|\end{align}with respect to the Lebesgue measure  on $\mathbb{R}^n$. Here,  the normalization constant\begin{equation}{G_{n}}:=\int_{\mathbb{R}^n}d\lambda_1\cdots d\lambda_n
e^{-\sum\limits_{i=1}^n\lambda_i^2/2}\prod_{i<j}|\lambda_i-\lambda_j|
\end{equation}is (Proposition 4.7.1 in  \cite{For})\begin{equation}{G_{n}}=(2\pi)^{n/2}\prod_{j=0}^{n-1}\frac{\Gamma(1+(j+1)/2)}{\Gamma(3/2)}.
\end{equation}
In fact, one may view the above joint density as the one-component log-gas of $n$ particles with charge $q=1$  on the real line  and the  Hamiltonian is
$$H(\lambda_1,\cdots, \lambda_n)=\sum_{i=1}^n\lambda_i^2/2-\sum_{i<j}\log |\lambda_i-\lambda_j|.$$
Now let's consider the following point process on $\mathbb R_+$ \begin{align}\label{chiii}&\chi^{(n)}=\sum_{i=1}^{n-1}\delta_{n(\lambda_{(i+1)}-\lambda_{(i)})},
\end{align}where $\lambda_{(i)}\ (1\leq i\leq n)$ is the increasing rearrangement of $\lambda_{i}\ (1\leq i\leq n)$. The main result of this article  is \begin{thm}\label{thm1}
Let $\lambda_1,\cdots,\lambda_{n}$ be eigenvalues of GOE, 
then the point process $\chi^{(n)} $ will converge to a Poisson point process $ \chi$ as $n\to+\infty$ with intensity\begin{align*}&\mathbb{E}\chi(A)= \frac{1}{4}\int_Audu,
\end{align*}where $ A\subset\mathbb{R}_{+}$ is any bounded Borel set.
\end{thm}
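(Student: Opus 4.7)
The plan is to adapt the strategy of \cite{FW} for C$\beta$E to the GOE setting via the method of factorial moments. By a standard Poisson convergence criterion for simple point processes, it suffices to prove that for every bounded Borel set $A\subset\mathbb{R}_+$ and every integer $k\geq 1$,
\begin{equation*}
\mathbb{E}\bigl[\chi^{(n)}(A)(\chi^{(n)}(A)-1)\cdots(\chi^{(n)}(A)-k+1)\bigr] \longrightarrow \left(\tfrac{1}{4}\int_A u\, du\right)^k,
\end{equation*}
together with the analogous factorization over disjoint sets. Expanding via \eqref{chiii}, the left side is a sum over distinct index tuples $(i_1,\ldots,i_k)$ of the joint probability that the $k$ rescaled consecutive gaps $n(\lambda_{(i_j+1)}-\lambda_{(i_j)})$ all lie in $A$. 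By symmetry of the joint density \eqref{6}, this can be rewritten as an $(n-2k)$-dimensional integral obtained by fixing the pairs $(\lambda_{(i_j)},\lambda_{(i_j+1)})=(a_j,a_j+u_j/n)$ with $u_j\in A$ and imposing that no other eigenvalue fall in any of the $k$ specified intervals $(a_j,a_j+u_j/n)$.

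I would then split the sum over index tuples into a \emph{diagonal} part, in which some two indices are within a slowly-growing distance $M(n)$ of each other, and the complementary \emph{off-diagonal} part. For the diagonal part, an $o(1)$ bound should follow from upper estimates on the correlation functions of GOE obtained directly by integrating \eqref{6}; since no determinantal or Pfaffian kernel is employed, this requires delicate pointwise control on the integrand and takes on the role played by the Hadamard--Fisher inequality in the GUE case. For the off-diagonal part, the $k$ gaps decouple asymptotically and the joint probability factorizes into a product of single-gap probabilities. Each factor is evaluated by fixing two consecutive eigenvalues at $\lambda$ and $\lambda+u/n$ and integrating out the remaining $n-2$ eigenvalues against \eqref{6} under the emptiness constraint. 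The Vandermonde factor $|\lambda_i-\lambda_j|$ with $\beta=1$ contributes a repulsion of order $u/n$ per gap, and averaging the resulting expression over the bulk location $\lambda$ against the semicircle density induced by \eqref{6} should produce the universal constant $\tfrac{1}{4}$.

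The main obstacle will be the precise asymptotic evaluation and uniform control of these $(n-2k)$-dimensional integrals of $e^{-\sum\lambda_i^2/2}\prod|\lambda_i-\lambda_j|$ over the complement of one or more small subintervals. Unlike GUE or CUE, where a determinantal kernel gives closed formulas for correlation functions, and unlike \cite{FW}, where the circular symmetry of C$\beta$E allows direct application of Selberg integrals, here the integrals must be handled by a Laplace-method/Selberg-type analysis on the real line and the bulk inhomogeneity must be accommodated. Extracting both the correct exponent on $u$ and the exact constant $\tfrac{1}{4}$, with error bounds sharp enough to close the diagonal/off-diagonal split, is the technical heart of the argument.
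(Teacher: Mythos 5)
Your factorial-moment strategy is the right high-level framework and aligns with what the paper does, but the route you sketch diverges substantially from the actual proof at the points that matter, and the hard steps you flag are precisely the ones the paper avoids rather than attacks head-on.

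The central move in the paper is to introduce the auxiliary "all-pairs" process $\widetilde{\chi}^{(n)}=\sum_{i<j}\delta_{n|\lambda_i-\lambda_j|}$ and prove separately (Lemma \ref{lem4}) that $\chi^{(n)}(A)-\widetilde{\chi}^{(n)}(A)\to 0$ in probability, i.e., that there are no successive small gaps. Working with $\widetilde{\chi}^{(n)}$ kills the emptiness constraint that your plan retains: you want to condition on "no other eigenvalue in $(a_j,a_j+u_j/n)$," which is a void-probability constraint and is genuinely hard to control for GOE where no determinantal kernel is available. In contrast, the factorial moments of $\widetilde{\chi}^{(n)}$ lead (via the combinatorial bookkeeping in Lemma \ref{lem14}) to $\mathbb{E}\rho^{(k,n)}(A^k)$, which is just the integral of the joint density over the set $\Sigma_{n,k,c}$ where $k$ prescribed pairs are close, with no exclusion constraint at all. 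That integral is then sandwiched between multiples of the two-component log-gas partition function $G_{n-2k,k}$ (Lemma \ref{lem12}), using an elementary Hermite-expansion estimate on $F(x)=e^{-x^2/2}\prod_j(x-\lambda_j)$ (Lemma \ref{lem10}). Nowhere does the paper need pointwise control of correlation functions, which is what your "delicate pointwise control on the integrand" would amount to and which is exactly the difficulty the paper is designed to circumvent.

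The second genuine gap is the constant. You propose to produce $\tfrac14$ by a Laplace/Selberg asymptotic analysis averaged over the bulk, but the paper obtains $G_{n-2k,k}=2^{-2k}G_n$ as an \emph{exact identity for every $n$} (Lemma \ref{lem9}), proved algebraically via Pfaffians and Hermite polynomials, with no $n\to\infty$ limit and no semicircle appearing. This matters not just aesthetically: the exact identity removes an entire layer of error control that your asymptotic plan would have to supply, and it is far from obvious how to carry out a Selberg-type computation on the real line with a Gaussian weight and an inhomogeneous density. (The paper itself remarks that for a general potential one would need either such an identity or a Lemma-4-of-\cite{FW}-style asymptotic; for GOE the identity is available.) Your "diagonal/off-diagonal split by index separation" plays a role analogous to the no-successive-small-gaps lemma, but it is attached to the wrong object — with the emptiness constraint still present, the off-diagonal part does not reduce to a clean partition-function quotient. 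In short: the overall moment strategy is correct, but the choice to stay with $\chi^{(n)}$ rather than pass to $\widetilde{\chi}^{(n)}$, and the choice to pursue asymptotics rather than the Pfaffian identity, leave the two hardest steps of the argument unresolved.
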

As a direct consequence of Theorem \ref{thm1}, we will have 
\begin{cor}\label{cor}Let's denote $t^n_k$ as the $k$-th smallest gap and $ \tau^n_k=2^{-3/2}nt^n_k$,  then 
\begin{equation}\label{s1}\lim_{n\to+\infty}\mathbb P(\tau^n_k\in A)=\int_A\frac{2}{(k-1)!} x^{2k-1}e^{-x^{2}}dx\end{equation}
for any bounded interval $ A\subset\mathbb{R}_{+}$.
\end{cor}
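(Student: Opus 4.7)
The plan is to deduce the corollary from Theorem \ref{thm1} by interpreting $nt^n_k$ as the $k$-th smallest atom of $\chi^{(n)}$ and then using Poisson convergence to transfer this to the $k$-th smallest point of the limiting Poisson process $\chi$.

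First, I would observe the basic identity
\begin{equation*}
\{nt^n_k \le x\} \;=\; \{\chi^{(n)}([0,x]) \ge k\},
\end{equation*}
since $\chi^{(n)}([0,x])$ counts the number of normalized gaps not exceeding $x$, and $nt^n_k$ is (by definition of the ordered gaps) the $k$-th smallest such value. Theorem \ref{thm1} asserts that $\chi^{(n)} \to \chi$ in the sense of point-process convergence, where $\chi$ is the Poisson point process on $\mathbb{R}_+$ with intensity $\tfrac{u}{4}\,du$. Since the intensity measure is absolutely continuous, the boundary $\{0,x\}$ of $[0,x]$ has measure zero, so this convergence implies
\begin{equation*}
\mathbb{P}(\chi^{(n)}([0,x]) \ge k) \;\longrightarrow\; \mathbb{P}(\chi([0,x]) \ge k) \qquad (n\to\infty).
\end{equation*}

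Next, I would compute the limit explicitly. The random variable $\chi([0,x])$ is Poisson with parameter $\Lambda(x) = \int_0^x \tfrac{u}{4}\,du = x^2/8$, hence
\begin{equation*}
\mathbb{P}(\chi([0,x]) \ge k) \;=\; 1 - \sum_{j=0}^{k-1} \frac{(x^2/8)^j}{j!}\, e^{-x^2/8}.
\end{equation*}
Differentiating in $x$ (which yields a telescoping collapse) gives the density of the $k$-th smallest point of $\chi$,
\begin{equation*}
f_k(x) \;=\; \frac{x}{4} \cdot \frac{(x^2/8)^{k-1}}{(k-1)!}\, e^{-x^2/8}.
\end{equation*}
This is the limiting density of $nt^n_k$.

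Finally, I would change variables. Since $\tau^n_k = 2^{-3/2} n t^n_k$, a substitution $u = 2^{3/2} x$ in the density $f_k$ produces the expression $\frac{2}{(k-1)!} x^{2k-1} e^{-x^2}$, matching the statement of Corollary \ref{cor}. The powers of $2$ work out cleanly because the factor $(2^{3/2})^2 = 8$ exactly cancels the $8$ in the exponential, while $(2^{3/2})^{2k-1} \cdot 2^{3/2} / (4 \cdot 8^{k-1}) = 2$. There is no real obstacle here: the only mildly nontrivial point is that the convergence of point processes from Theorem \ref{thm1} yields the convergence of $\mathbb{P}(\chi^{(n)}([0,x]) \ge k)$ for every fixed $x > 0$, which is immediate because the limiting intensity assigns zero mass to the endpoint $x$.
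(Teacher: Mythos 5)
Your proof is correct and is exactly the standard route the paper has in mind when it calls the corollary a ``direct consequence'' of Theorem \ref{thm1}: identify the event $\{nt^n_k\le x\}$ with $\{\chi^{(n)}((0,x])\ge k\}$, pass to the limit using weak convergence of the point processes (the endpoint $x$ carries no intensity mass, so the counting functional is a.s.\ continuous), evaluate the Poisson tail with parameter $x^2/8$, differentiate to get the density of the $k$-th smallest point, and rescale by $2^{3/2}$. The arithmetic with the powers of $2$ checks out and yields $\frac{2}{(k-1)!}x^{2k-1}e^{-x^2}$ as claimed; this mirrors how \eqref{sc} is obtained from the C$\beta$E Poisson limit in \cite{FW}, so nothing further is needed.
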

 
As a remark, the factor $1/ 4$ in Theorem \ref{thm1} is quite meaningful. In fact,  the main observation in Lemma \ref{lem9}  is that 
\begin{equation}\label{dttime}1/ 4=(G_{n-2k,k}/G_{n})^{1 /k},\end{equation} i.e.,  its $k$-th power is the quotient of the generalized partition function of the two-component
log-gas (where the system consists of $n-2k$ particles with charge $q=1$ and $k$ particles with charge $q=2$) and the partition function of the one-component log-gas (see \S \ref{pre} for these definitions). 
Actually, one of the  crucial ideas in the whole proof  is that one can bound  the integrals of the joint density of  one-component log-gas by the generalized partition functions of two-component log-gas (see Lemma \ref{lem12} in \S \ref{twotwo}). 

 

\subsection{Remarks}
One may  consider the smallest gaps for G$\beta$E with the joint density \begin{align}\label{jbeta}\frac{1}{Z_{n,\beta}}e^{-n\beta \sum\limits_{i=1}^n\lambda_i^2/2}\prod_{1\leq i<j\leq n}|\lambda_i-\lambda_j|^\beta \end{align}
 where $\beta>0$  and $Z_{n,\beta}$ is the normalization constant.  Note that compared with \eqref{6},  the joint density \eqref{jbeta} with $\beta=1$ has a factor $n$ in the exponential function,  this  will cause an extra factor $\sqrt n$ for the spacings of eigenvalues, i.e., the smallest gap is of order $n^{-3/2}$ under the joint density $\eqref{jbeta}$ with $\beta=1$ for GOE.

By  comparing the limiting densities \eqref{s2},\eqref{s1} with \eqref{sc} with $\beta=1, 2$, 
it is believed that the smallest gaps of G$\beta$E  have the same limiting behaviors as C$\beta$E and we propose the following conjecture. 
\begin{conj} \label{conjec}
Let's denote $t^n_{k,\beta}$ as the $k$-th smallest gap of G$\beta$E with the joint density \eqref{jbeta}, then there exists  some constant $c_\beta$ depending on $\beta$  such that  \begin{equation}  \tau^n_{k,\beta}=c_{\beta}n^{(\beta+2)/(\beta+1)}t^n_{k,\beta}\end{equation} has  the limiting density 
\begin{equation}\frac{\beta+1}{(k-1)!} x^{k(\beta+1)-1}e^{-x^{\beta+1}}\end{equation}
as $n \to \infty$.\end{conj}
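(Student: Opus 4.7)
The plan is to extend the approach of the present paper from $\beta = 1$ to arbitrary $\beta > 0$. Set $a_n := c_\beta n^{(\beta+2)/(\beta+1)}$ for a constant $c_\beta$ to be determined, and consider the rescaled nearest-neighbour process $\chi^{(n)}_\beta := \sum_{i=1}^{n-1} \delta_{a_n(\lambda_{(i+1)} - \lambda_{(i)})}$ on $\mathbb{R}_+$, where $\lambda_{(1)} < \cdots < \lambda_{(n)}$ are the ordered eigenvalues under \eqref{jbeta}. The conjecture is equivalent to showing that $\chi^{(n)}_\beta$ converges weakly to the Poisson point process on $\mathbb{R}_+$ with intensity $(\beta+1) u^\beta du$, because the standard formula for the density of the $k$-th smallest point of such a Poisson process immediately yields $\frac{\beta+1}{(k-1)!} x^{k(\beta+1)-1} e^{-x^{\beta+1}}$.

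As in Theorem \ref{thm1}, I would proceed via the factorial moment method and reduce the problem to showing
$$\mathbb{E}\left[\binom{\chi^{(n)}_\beta(A)}{k}\right] \longrightarrow \frac{1}{k!}\left(\int_A (\beta+1) u^\beta du\right)^k$$
for every bounded Borel set $A \subset \mathbb{R}_+$ and every integer $k \geq 1$. Writing the left-hand side via \eqref{jbeta} as an integral over configurations with $k$ designated disjoint close consecutive pairs, one changes variables in each pair to its midpoint $\mu_j$ and microscopic gap $s_j = \lambda_{i_j+1} - \lambda_{i_j}$. The Vandermonde product then factors, to leading order, into a local contribution $s_j^\beta$ per pair, pair--singleton interactions $|\mu_j - \lambda_\ell|^{2\beta}$, and pair--pair interactions $|\mu_j - \mu_{j'}|^{4\beta}$. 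After rescaling $s_j = \sigma_j / a_n$, the local contributions give $\prod_j \int_A \sigma_j^\beta d\sigma_j$, producing the $(\beta+1) u^\beta$ profile up to a global constant.

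The principal analytic task is then to evaluate asymptotically the ratio between the remaining integral over $(\mu_1,\ldots,\mu_k,\lambda_1,\ldots,\lambda_{n-2k})$ --- which is exactly a two-component log-gas partition function with $n-2k$ charge-$1$ particles and $k$ charge-$2$ particles --- and the original normalization constant $Z_{n,\beta}$. This is the G$\beta$E analogue of the identity $1/4 = (G_{n-2k,k}/G_n)^{1/k}$ used in the present paper, and it is what determines $c_\beta$. For general $\beta$ there is no determinantal or Pfaffian formula to exploit, so the asymptotics must be extracted from Selberg-type integrals combined with an equilibrium-measure / saddle-point analysis of the associated energy functional, in the same spirit as the treatment of C$\beta$E in \cite{FW}; the constant $c_\beta$ should end up being a Gamma-function expression in the family of \eqref{abb}.

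The main obstacle will be controlling uniformly the errors coming from the above factorization of the Vandermonde, and correspondingly from replacing each close pair by a single charge-$2$ particle. For $\beta = 1$ the present paper dominates the one-component integrand by a two-component partition function using a simple positivity inequality on $|\lambda_i - \lambda_j|$; for non-integer $\beta$ such a monotonicity fails, and one would instead need local rigidity for G$\beta$E eigenvalues at microscopic and mesoscopic scales together with a stratification of configuration space according to how many additional eigenvalues lie within various mesoscopic distances of each collapsing pair, absorbing residuals with Hölder-type bounds on the Vandermonde. Beyond $\beta = 1,2,4$ there is moreover no matrix model available, so every estimate must be carried out variationally from the Hamiltonian $\frac{n\beta}{2}\sum \lambda_i^2 - \beta \sum_{i<j}\log|\lambda_i-\lambda_j|$, and this is precisely the obstruction keeping the statement at the level of a conjecture.
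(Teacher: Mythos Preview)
The statement you are addressing is Conjecture~\ref{conjec} in the paper, not a theorem: the paper does not prove it and explicitly presents it as open. Consequently there is no proof in the paper against which to compare your proposal.

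Your proposal is not a proof either, and you say so yourself in the final paragraph. What you have written is a strategy sketch: reduce to Poisson convergence of the rescaled gap process via factorial moments, interpret the leading contribution after collapsing $k$ close pairs as a two-component log-gas partition function, and identify the constant $c_\beta$ through the asymptotics of the ratio of that partition function to $Z_{n,\beta}$. This is exactly the heuristic the paper itself offers in its Remarks subsection, where it notes that the GOE argument ``seems'' to extend to G$\beta$E and that ``one of the difficulties is to prove some identity as \eqref{dttime} or some asymptotic limit as in Lemma 4 in \cite{FW}.'' So your outline is consistent with the authors' own view of the problem, but neither you nor they supply the missing analysis.

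One technical point worth flagging: the integral inequalities in Lemma~\ref{lem10} and Lemma~\ref{lem12} that make the GOE argument work rely on writing $F(x)=e^{-x^2/2}\prod(x-\lambda_j)$ in the Hermite basis and using the $L^2$ orthogonality \eqref{45} to bound $\int|F'|^2$ by $2n\int|F|^2$. For general $\beta$ the relevant quantity is $\int|F|^{2\beta}$, and this spectral-gap type estimate has no obvious replacement; your appeal to ``local rigidity'' and a ``stratification of configuration space'' names the gap but does not close it. Until an analogue of Lemma~\ref{lem9} (the exact ratio $G_{n-2k,k}/G_n$) or a sufficiently sharp asymptotic substitute is established for general $\beta$, the argument remains a program rather than a proof.
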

It seems that our strategy to prove the smallest gaps for GOE can be used to prove that of G$\beta$E  and more general ensembles with the joint density 
 \begin{align}\frac{1}{Z_{n,\beta, V}}e^{-n\beta \sum\limits_{i=1}^nV(\lambda_i)}\prod_{1\leq i<j\leq n}|\lambda_i-\lambda_j|^\beta.\end{align}
It's very likely that Conjecture \ref{conjec} is still true for general potential $V(x)$ with mild assumptions instead of $x^2/2$. One of the difficulties is to prove some  identity as \eqref{dttime} or some asymptotic limit as in Lemma 4 in \cite{FW}. 
Actually, there are some results only in the case of $\beta=2$, for example,  Vinson derived the smallest gaps when the potential $V(x)$ is a real analytic potential which is regular and whose equilibrium measure supported on a single interval \cite{V}; while in \cite{FG}, Figalli-Guionnet derived the universal results for the smallest gaps for some invariant multimatrix Hermitian matrices.

Recently, in \cite{B, ds}, Bourgade and  Landon-Lopatto-Marcinek proved the universality for the extreme gaps in the bulk of the general Hermitian and symmetric Wigner matrices with assumptions.

In the end, let's mention some conjectures and results regarding  the local statistics of many other important point processes that are related to the classical random matrix models.  
The local statistics of eigenvalues of the Laplacian of several integrable systems are
believed to follow Poisson statistics \cite{BT},  while for generic chaotic systems,
such as non-arithmetic surfaces of negative curvature, they are expected to follow
the GOE \cite{BG} (see \cite{BBB}  for the results about the smallest gaps between the first $N$ eigenvalues of the Laplacian on a rectangular billiard as $N$ large enough). In number theory, the local statistics of zeros
of Riemann zeta function are expected to follow the GUE \cite{D, Mo}.   
In high energy physics, the numerical results in \cite{GV, GV2} indicate that the local behaviors of the SYK model, which describes $n$ (an even integer) random interacting Majorana modes on a quantum dot \cite{black}, are similar to GOE ($n=0$ mod 8), GUE($n=2,6$ mod $8$) and GSE($n=4$ mod $8$), i.e., the single SYK model encodes the three classical random matrix models. We also refer to \cite{FTD1, FTD2, FTD3} for the mathematical results on the SYK model.

The organization of this article is as follows. In Section 2, we review some basic facts about the joint density of GOE, two-component log-gas, the Hermite polynomials and the Pfaffian of an antisymmetric matrix. In Section 3, we prove an important identity for the generalized partition functions of the two-component log-gas of GOE. Its proof uses certain properties of Pfaffians and Hermite polynomials regarding GOE. In Section 4, we introduce and discuss two more auxiliary point processes. In Section 5, we prove the non-existence of successive small gaps. In Section 6, we establish certain integral inequalities for the two-component log-gas. In Section 7, we complete the proof of Theorem \ref{thm1}.

\bigskip
\textbf{Acknowledgement:} We would like to thank P. Bourgade, O. Zeitouni, G. Ben Arous and P. Forrester for  many helpful discussions.


\section{Preliminaries} \label{pre}

In this section, we will first review some results regarding the joint density of GOE, two-component log-gas  and the Hermite polynomials. Then we will recall the 
definition and several basic properties of the Pfaffian of an antisymmetric matrix.

 As explained in \cite{Me} (see (5.2.9) and (6.1.2)-(6.1.5) in \cite{Me}),  we can rewrite the joint density \eqref{6} as 
 $$|J_n(x_1,\cdots, x_n)|/G_n,$$
 where $J_n(x_1,\cdots, x_n)$ can be expressed in terms of a determinant as  \begin{align}\label{46}&J_n(x_1,\cdots, x_n):=e^{-\sum\limits_{i=1}^nx_i^2/2}\prod_{j<i}(x_i-x_j)=c_n\det[\varphi_{i-1}(x_j)]_{i,j=1,\cdots,n},
\end{align}
 and the partition function of the integration constant is \begin{align}\label{42}{G_{n}}=&\int_{\mathbb{R}^n}dx_1\cdots dx_n
|J_n(x_1,\cdots, x_n)|\\ \nonumber=&n!c_n\int_{x_1<\cdots<x_n}\!dx_1\cdots dx_n\det[\varphi_{i-1}(x_j)]_{i,j=1,\cdots,n}, 
\end{align}
where $c_n>0$ is a constant depending only on $n$ and\begin{equation}\label{vard}\varphi_{j}(x)=(2^jj!\sqrt{\pi})^{-1/2}e^{-x^2/2}H_j(x)=(2^jj!\sqrt{\pi})^{-1/2}e^{x^2/2}
(-d/dx)^je^{-x^2}\end{equation}are the "oscillator wave functions" orthogonal over $\mathbb{R}$ such that  \begin{align}\label{45}&\int_{\mathbb{R}}\varphi_{j}(x)\varphi_{k}(x)dx=\delta_{jk}=\left\{\begin{array}{ll}
                                                                                     1, & \text{if}\ j=k;\\
                                                                                     0, & \text{otherwise.}
                                                                                   \end{array}
\right.\end{align}
 Here, $\{H_j(x)\}$ are Hermite polynomials. 
From the following recurrence relations of Hermite polynomials \begin{align}\label{40}&H_{j+1}(x)=2xH_j(x)-2jH_{j-1}(x),\ H_j'(x)=2jH_{j-1}(x),
\end{align} one deduces\begin{align}\label{41}&\sqrt{2}\varphi_{j}'(x)=\sqrt{j}\varphi_{j-1}(x)-\sqrt{j+1}\varphi_{j+1}(x),\,\,\,j\geq 0,
\end{align}where we denote $\varphi_{-1}(x)=0$. 
Moreover, we have (see (5.47) in \cite{For})\begin{align}\label{54}&H_j(x)=\sum_{m=0}^{[j/2]}(-1)^m2^{j-m}{j\choose 2m}\frac{(2m)!}{2^mm!}x^{j-2m},
\end{align}and $H_n(x)$ is uniquely determined by the first equation of \eqref{40} and the initial condition $H_0(x)=1,\ H_1(x)=2x.$ From the expression of $H_j(x)$, we also have\begin{align}\label{43}&\text{span}\{x^j;j\in\mathbb{Z}\cap[0,n]\}=\text{span}\{H_j(x);j\in\mathbb{Z}\cap[0,n]\},\\
\label{44}&V_n:=\text{span}\{x^je^{-x^2/2};j\in\mathbb{Z}\cap[0,n]\}=\text{span}\{\varphi_j(x);j\in\mathbb{Z}\cap[0,n]\}.
\end{align}

Actually, the joint density \eqref{6} can be identified with the Boltzmann factor of a particular one-component log-gas (see \S 1.4 in \cite{For}). 
One can also define the two-component log-gas for the system that consists of $n_1$ particles with charge $q=1$ and $n_2$ particles
with charge $q = 2$. The two-component log-gas provides an interpolation between GOE ($\beta=1$) and GSE ($\beta=4$) (see \cite{R} and \S 6.7 in \cite{For}). For the two-component log-gas, the generalized partition function of the integration constant is \begin{equation}\label{gaa}{G_{n_1,n_2}}:=\int_{\mathbb{R}^{n_1+n_2}}d\lambda_1\cdots d\lambda_{n_1+n_2}e^{-\sum\limits_{i=1}^{n_1+n_2}q_i\lambda_i^2/2}
\prod_{j<k}|\lambda_j-\lambda_k|^{q_jq_k},
\end{equation}where $q_j=1$ for $1\leq j\leq n_1$ and $q_j=2$ for $n_1+1\leq j\leq n_1+n_2.$ 


Let\begin{align*}&J_{n_1,n_2}(x_1,\cdots, x_{n_1+n_2}):=e^{-\sum\limits_{i=1}^{n_1+n_2}q_ix_i^2/2}\prod_{j<i}(x_i-x_j)^{q_iq_j},
\end{align*}where $q_j=1$ for $1\leq j\leq n_1$ and $q_j=2$ for $n_1+1\leq j\leq n_1+n_2.$ Then we have\begin{align*}&J_{n_1,n_2}(x_1,\cdots, x_{n_1+n_2})=\prod_{j=1}^{n_2}\frac{\partial}{\partial y_{n_1+2j}}J_{n_1+2n_2}(y_1,\cdots, y_{n_1+2n_2}),
\end{align*}where the right hand side is evaluated at $y_j=x_j$ for $j\in \mathbb{Z}\cap[1, n_1]$ and $ y_{n_1+2j}=y_{n_1+2j-1}=x_{n_1+j}$ for $j\in \mathbb{Z}\cap[1, n_2]. $ Therefore, differentiating \eqref{46}, we have\begin{align*}&J_{n_1,n_2}(x_1,\cdots, x_{n_1+n_2})=c_{n_1+2n_2}\det\left[\begin{array}{l}
                                     [\varphi_{i-1}(x_j)]_{i=1,\cdots,n_1+2n_2;\ j=1,\cdots,n_1} \\
                                     \left[\begin{array}{l}
                                             \varphi_{i-1}(x_j) \\
                                             \varphi_{i-1}'(x_j)
                                           \end{array}
                                     \right]_{\text{\tiny\!\!$\begin{array}{l}i=1,\cdots,n_1+2n_2\\ j=n_1+1,\cdots,n_1+n_2\end{array}$}}
                                   \end{array}
\right]
\end{align*}and\begin{align}\label{47}&{G_{n_1,n_2}}=\int_{\mathbb{R}^{n_1+n_2}}dx_1\cdots dx_{n_1+n_2}
|J_{n_1,n_2}(x_1,\cdots, x_{n_1+n_2})|=\\ \nonumber(n_1!)c_{n_1+2n_2}&\int_{\Delta_{n_1}\times\mathbb{R}^{n_2}}\!dx_1\cdots dx_{n_1+n_2}\det\left[\begin{array}{l}
                                     [\varphi_{i-1}(x_j)]_{i=1,\cdots,n_1+2n_2;\ j=1,\cdots,n_1} \\
                                     \left[\begin{array}{l}
                                             \varphi_{i-1}(x_j) \\
                                             \varphi_{i-1}'(x_j)
                                           \end{array}
                                     \right]_{\text{\tiny\!\!$\begin{array}{l}i=1,\cdots,n_1+2n_2\\ j=n_1+1,\cdots,n_1+n_2\end{array}$}}
                                   \end{array}
\right].
\end{align}Here, $ \Delta_j=\{x_1<\cdots<x_j\}\subset\mathbb{R}^{j}$ is a simplex. We also have \begin{equation}\label{ggg}G_n=G_{n,0}.\end{equation}

Now let's recall the definition of the Pfaffian of an antisymmetric matrix of even size (see Definition 6.1.4 in \cite{For}):  Let $X=[\alpha_{ij}]_{i,j=1,\cdots,2N}$ be an antisymmetric matrix. Then the Pfaffian of $X$ is defined by\begin{align}\label{Pf}\Pf X&=\sum_{P(2l)>P(2l-1)}^*\varepsilon(P)\prod_{l=1}^{N}\alpha_{P(2l-1)P(2l)}\\ \nonumber&=\frac{1}{2^NN!}\sum_{P\in S_{2N}}\varepsilon(P)\prod_{l=1}^{N}\alpha_{P(2l-1)P(2l)},
\end{align}where in the first summation the $*$ denotes that the sum is restricted to distinct terms only and $\varepsilon(P)$ is the signature of the permutation $P$.

When $X$ is a  $2N\times2N$ antisymmetric matrix and $B$ is a general $2N\times2N$ matrix, then we have (see (6.12) and (6.35) in \cite{For}) \begin{align}\label{49}(\Pf X)^2=\det X,\ \Pf(B^TXB)=(\det B)(\Pf X),\ \Pf(\lambda X)=\lambda^N\Pf X.
\end{align}Here, the third identity follows from the definition \eqref{Pf}.

\section{Partition functions of two-component log-gas}\label{23e}

In this section, we will prove Lemma \ref{lem9} for the two-component log-gas of GOE. The proof is based on the properties of Pfaffians and Hermite polynomials regarding GOE (see \cite{DG, For, Me, R} for more details) and some integration
techniques from Chapter 6 of \cite{For}. 
\begin{lem}\label{lem9}For
any positive integers $n,k$, $n\geq 2k$, we have $ G_{n-2k,k}=2^{-2k}G_{n}.$\end{lem}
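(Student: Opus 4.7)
The plan is to express both $G_n$ and $G_{n-2k,k}$ as Pfaffians of $n\times n$ antisymmetric matrices built from the Hermite functions $\varphi_j$, and then to compare them using the Hermite recurrence \eqref{41} together with de Bruijn's integration identity.

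First, applying de Bruijn's formula to \eqref{42}, I would write $G_n = n!\,c_n\,\Pf[\mathcal{A}]$, where
\[
\mathcal{A}_{ij}=\int\int_{x<y}(\varphi_{i-1}(x)\varphi_{j-1}(y)-\varphi_{j-1}(x)\varphi_{i-1}(y))\,dx\,dy
\]
is an $n\times n$ antisymmetric matrix (when $n$ is odd, adjoin an extra row/column involving $\int\varphi_{i-1}(x)\,dx$ in the standard way). Turning to $G_{n-2k,k}$ via \eqref{47}, I would expand the $n\times n$ determinant by iterated Laplace expansion along the $k$ charge-$2$ column pairs. Integrating each such pair over $y_l\in\mathbb{R}$ replaces that pair of columns by entries of the Wronskian-type antisymmetric matrix
\[
\mathcal{D}_{ij}=\int(\varphi_{i-1}(y)\varphi'_{j-1}(y)-\varphi_{j-1}(y)\varphi'_{i-1}(y))\,dy.
\]
Using \eqref{41} and \eqref{45}, one checks directly that $\mathcal{D}$ is tridiagonal with $\mathcal{D}_{m,m+1}=\sqrt{2m}$ and zero otherwise up to antisymmetry. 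The residual integration of the $(n-2k)\times(n-2k)$ submatrix of $\varphi$-values over $\Delta_{n-2k}$ then produces sub-Pfaffians of $\mathcal{A}$ by de Bruijn.

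Combining, $G_{n-2k,k}/[(n-2k)!\,c_n]$ becomes a signed sum, indexed by choices of $k$ disjoint consecutive index pairs $\{i_l,i_l+1\}\subset\{1,\dots,n\}$, of terms $\prod_{l=1}^k\sqrt{2i_l}\cdot\Pf[\mathcal{A}_{I^c}]$ with $I=\bigcup_l\{i_l,i_l+1\}$. Thus the lemma reduces to the Pfaffian identity
\[
\sum_{\mathrm{configs}}\varepsilon(\mathrm{config})\prod_{l=1}^k\sqrt{2i_l}\,\Pf[\mathcal{A}_{I^c}]=\frac{n!}{4^k\,(n-2k)!}\,\Pf[\mathcal{A}].
\]
As a sanity check, for $n=4$, $k=1$ the right-hand side is $3\Pf[\mathcal{A}]$, which equals $\sqrt{2}\,\mathcal{A}_{34}+2\,\mathcal{A}_{14}+\sqrt{6}\,\mathcal{A}_{12}$ by direct evaluation using $\mathcal{A}_{13}=\mathcal{A}_{23}=\mathcal{A}_{24}=0$ and the explicit values of $\mathcal{A}_{12},\mathcal{A}_{14},\mathcal{A}_{34}$ obtained from the recurrence.

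I expect this identity to be the main obstacle. My plan would be to prove it either by induction on $k$, reducing to the base case $k=1$ (an "adjacent" Pfaffian expansion identity) and then iterating, or else by changing basis to the skew-orthogonal polynomials for GOE in which $\mathcal{A}$ becomes block-diagonal with explicit $2\times 2$ blocks; in that basis both sides reduce to elementary products of skew-normalization constants weighted against the $\sqrt{2i_l}$ factors, and the $4^{-k}$ emerges from the $\sqrt{2}^{-1}$ coefficients in \eqref{41} contributing one factor of $2^{-1}$ from each of the $2k$ rows affected by the charge-$2$ replacement. The delicate point is tracking the combinatorial signs $\varepsilon(\mathrm{config})$ and verifying that the "non-consecutive-pair" contributions to the full Pfaffian of $\mathcal{A}$ cancel correctly after the substitution.
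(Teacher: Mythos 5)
Your setup is sound and matches the paper's framework up to the intermediate stage: de Bruijn's formula (the paper's Lemma 15 / integration over alternate variables) does give $G_{n-2k,k}$ as a coefficient $[\zeta^{(n-2k)/2}]\Pf(B+\zeta A)$ where $B$ is the tridiagonal matrix of your $\mathcal{D}_{ij}$'s and $A$ is your $\mathcal{A}$; expanding in $\zeta$ and using the tridiagonality then gives exactly the signed sum over disjoint consecutive pairs that you wrote, and your $n=4$, $k=1$ numerical check is correct. However, you stop precisely where the real work begins. Your proposed identity
\[
\sum_{\mathrm{configs}}\varepsilon(\mathrm{config})\prod_{l=1}^k\sqrt{2i_l}\,\Pf[\mathcal{A}_{I^c}]=\frac{n!}{4^k(n-2k)!}\Pf[\mathcal{A}]
\]
has no hope of being a formal Pfaffian identity; it is only true for the specific $\mathcal{A}$ built from Hermite functions, and neither of your two suggested attacks supplies the structural fact that makes it true. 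That fact, which is the heart of the paper's argument (Lemma~\ref{lem17}(e)), is the algebraic relation $B_nA_n=-4I_n$ for $n$ even: the $\alpha$-matrix is, up to the constant $-4$, the \emph{inverse} of the tridiagonal $\beta$-matrix. Granting this, one factors
\[
B_n+\lambda^2A_n=-(B_n+2\lambda I_n)^T B_n^{-1}(B_n+2\lambda I_n),
\]
so that $\Pf(B_n+\lambda^2A_n)=\det(B_n+2\lambda I_n)/\Pf B_n$, and $\det(B_n+2\lambda I_n)$ satisfies the Hermite three-term recurrence (Lemma~\ref{lem19}), whence the coefficient extraction is immediate. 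Nothing in your sketch discovers or exploits this inverse relation, and your skew-orthogonal-basis alternative is unlikely to help: a change of basis that block-diagonalizes $\mathcal{A}$ will destroy the tridiagonal structure of $\mathcal{D}$, and you give no reason to expect the transformed $\mathcal{D}$ to become elementary.

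A second gap is the parity of $n$. Your remark about adjoining an extra row/column handles the de Bruijn step for $G_n$ when $n$ is odd, but the identity $B_nA_n=-4I_n$ itself is only valid for $n$ even (it uses $\alpha_{n+1,k}=0$, which in turn requires $n+1$ odd). For $n$ odd the paper passes to $B_{n+1}$, replaces the bordered Pfaffian from Lemma~\ref{lem16} by $\Pf(B'_{n+1}+\zeta A_{n+1})$ with $B'_{n+1}=\mathrm{diag}(B_n,0)$ via the linearity in the last row and Lemma~\ref{lem17}(c), and then invokes a separate identity $\Pf(B'_n+\lambda^2A_n)=2\lambda D_{n-1}(\lambda)/\Pf B_n$ from Lemma~\ref{lem18}. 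None of this bookkeeping appears in your plan. So while you have reproduced the correct reduction, the essential insight ($BA=-4I$) and the nontrivial odd-$n$ bridging argument are both missing.
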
 


The following Lemma \ref{lem15} and Lemma \ref{lem16} give the expressions of $G_{n_1,n_2}$ for the cases $n_1$ even and $n_1$ odd separately, where one can express the generalized partition
functions $G_{n_1,n_2}$ in terms of Pfaffians via the method of integration over alternate variables (see \S 6.3.2 in \cite{For}).

\begin{lem}\label{lem15}For
the case $n_1$ even, we have \begin{align*}&{G_{n_1,n_2}}=(n_1!n_2!)c_{n_1+2n_2}[\zeta^{n_1/2}]\Pf[\beta_{j,k}+\zeta\alpha_{j,k}]_{j,k=1,\cdots,n_1+2n_2},
\end{align*}where $[\zeta^{j}]f $ denotes the coefficient of $\zeta^{j} $ in the power series expansion of $f$ and\begin{align*}&\alpha_{j,k}:=\int_{\mathbb{R}^2}\varphi_{k-1}(x)\varphi_{j-1}(y)\sgn(x-y)dxdy,
\\&\beta_{j,k}:=\int_{\mathbb{R}}(\varphi_{k-1}'(x)\varphi_{j-1}(x)-\varphi_{k-1}(x)\varphi_{j-1}'(x))dx.
\end{align*}\end{lem}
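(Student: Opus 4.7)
The plan is to start from the integral representation \eqref{47} of $G_{n_1,n_2}$ and evaluate it by the method of integration over alternate variables (Forrester, \S 6.3.2), recognizing the outcome as a Pfaffian via its combinatorial definition \eqref{Pf}. The oscillator wave functions $\varphi_{i-1}$ are arranged so that the $(n_1+2n_2)\times(n_1+2n_2)$ integrand-matrix has $n_1$ columns of the form $[\varphi_{i-1}(x_j)]$ and $n_2$ pairs of columns of the form $[\varphi_{i-1}(x_j), \varphi_{i-1}'(x_j)]$, which is the combinatorial shape we want to turn into a Pfaffian.

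First, I would apply multilinearity of the determinant together with a Laplace expansion along column blocks to decompose the big determinant into a signed sum over row-index decompositions $\{1,\ldots,n_1+2n_2\}=I\sqcup J_1\sqcup\cdots\sqcup J_{n_2}$ with $|I|=n_1$ and $|J_\ell|=2$. Each summand is the product of an $n_1\times n_1$ minor $\det[\varphi_{i-1}(x_j)]_{i\in I,\, j\le n_1}$ with $n_2$ two-by-two minors $\det\bigl[\varphi_{a-1}(x_{n_1+\ell}),\varphi_{a-1}'(x_{n_1+\ell});\varphi_{b-1}(x_{n_1+\ell}),\varphi_{b-1}'(x_{n_1+\ell})\bigr]$ for $J_\ell=\{a,b\}$, multiplied by the Laplace sign.

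Next, I would integrate. Over $\Delta_{n_1}$, since $n_1$ is even, de Bruijn's identity turns $\int_{\Delta_{n_1}}\det[\varphi_{i-1}(x_j)]_{i\in I}\,dx_1\cdots dx_{n_1}$ into the Pfaffian of the $n_1\times n_1$ block of $\alpha_{i,i'}$ indexed by $I$; this is where the factor $\sgn(x-y)$ built into $\alpha_{j,k}$ enters. Over each charge-2 variable, $\int_{\mathbb{R}}\det\bigl[\varphi_{a-1},\varphi_{a-1}';\varphi_{b-1},\varphi_{b-1}'\bigr]dx$ collapses to $\pm\beta_{a,b}$ directly from the definition of $\beta$. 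Summing and accounting for the $n_2!$ rearrangements of the pair-slots reorganizes everything into the full sum over perfect matchings of $\{1,\ldots,n_1+2n_2\}$ in which $n_1/2$ pairs are weighted by $\alpha$'s and $n_2$ pairs by $\beta$'s. Reading \eqref{Pf}, this is exactly $(n_1!\, n_2!)\, c_{n_1+2n_2}\cdot [\zeta^{n_1/2}]\Pf[\beta_{j,k}+\zeta\alpha_{j,k}]$, which is the claim.

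The main obstacle I anticipate is sign bookkeeping: the Laplace-expansion sign, the sign from comparing the $x$-ordered simplex integral with the unordered pairing in the de Bruijn Pfaffian, and the orientation choice inside each $2\times 2$ determinant for the charge-2 variables must all combine into exactly the $\varepsilon(P)$ prescribed by \eqref{Pf}. A secondary verification is the combinatorial prefactor: the $n_1!$ is the one already present in \eqref{47} from unfolding $\mathbb{R}^{n_1}$ into $n_1!\,\Delta_{n_1}$, while the $n_2!$ should appear when passing from a sum over ordered pair-assignments to the unordered matchings in the Pfaffian. Once these two accounting tasks are carried out consistently, the lemma follows directly.
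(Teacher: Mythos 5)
Your plan is correct, and it arrives at the same Pfaffian identity, but the order of operations is genuinely different from the paper's. The paper applies the method of integration over alternate variables to the \emph{entire} $(n_1+2n_2)\times(n_1+2n_2)$ determinant at once (following Forrester's Proposition 6.3.4), integrating $x_1,x_3,\dots,x_{n_1-1}$ against $x_2,x_4,\dots$, then Leibniz-expands the resulting determinant and integrates the remaining variables to land on the sum $\frac{n_1!\,c_{n_1+2n_2}}{(n_1/2)!}\sum_P\varepsilon(P)\prod a_{P(2l-1),P(2l)}\prod b_{P(2l-1),P(2l)}$; restricting to $P(2l)>P(2l-1)$ combines $a_{j,k}-a_{k,j}=\alpha_{j,k}$ and $b_{j,k}-b_{k,j}=\beta_{j,k}$ and hands you the Pfaffian coefficient with the $(n_1/2)!$ and $n_2!$ accounting built in. You instead Laplace-expand the block structure \emph{before} integrating, invoke de Bruijn's identity as a black box on the $n_1\times n_1$ simplex piece to produce $\Pf[\alpha_{i,i'}]_{I}$, and integrate each $2\times 2$ Wronskian minor over $\mathbb{R}$ to produce $\beta$'s; your $n_2!$ then appears when converting labeled $J_\ell$-blocks to the unlabeled pairs inside $\Pf[\beta_{j,k}+\zeta\alpha_{j,k}]$. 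Both routes are sound. The paper's version keeps the alternate-variable mechanism and the Leibniz expansion in a single pass, which makes the $\varepsilon(P)$ sign come out uniformly; your modular version buys conceptual clarity (the simplex integral is visibly the de Bruijn Pfaffian, and the charge-$2$ factors are visibly Wronskians) at the cost of having to recombine two independently-produced sign conventions, which is exactly the bookkeeping obstacle you flag. That recombination does work out, so the approach is valid, but you should be aware that it is not simply a rewording of the paper's argument.
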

\begin{proof}According to \eqref{47}, as in the proof of Proposition 6.3.4 in \cite{For}, applying the method of integration over alternate variables to integrate over $x_1,x_3,\cdots,x_{n_1-1},$ and expanding the resulting determinant to integrate over all the rest variables gives\begin{align*}&{G_{n_1,n_2}}=\frac{(n_1!)c_{n_1+2n_2}}{(n_1/2)!}\sum_{P\in S_{n_1+2n_2}}\varepsilon(P)\prod_{l=1}^{n_1/2}a_{P(2l-1),P(2l)}\prod_{l=n_1/2+1}^{n_1/2+n_2}b_{P(2l-1),P(2l)}
\end{align*}where\begin{align*}&a_{j,k}:=\int_{\mathbb{R}}dx\varphi_{k-1}(x)\int_{-\infty}^xdy\varphi_{j-1}(y),
\\&b_{j,k}:=\int_{\mathbb{R}}\varphi_{k-1}'(x)\varphi_{j-1}(x)dx.
\end{align*}Making the restriction $P(2l)>P(2l-1) ,$ 
 we further have\begin{align*}&{G_{n_1,n_2}}=\frac{(n_1!)c_{n_1+2n_2}}{(n_1/2)!}\sum_{P(2l)>P(2l-1)}\varepsilon(P)\prod_{l=1}^{n_1/2}
\alpha_{P(2l-1),P(2l)}\prod_{l=n_1/2+1}^{n_1/2+n_2}\beta_{P(2l-1),P(2l)}.
\end{align*}Then the result is a consequence
of the definition of a Pfaffian.\end{proof}\begin{lem}\label{lem16}For
the case $n_1$ odd, let $n=n_1+2n_2$, then we have \begin{align*}&\frac{G_{n_1,n_2}}{(n_1!n_2!)c_{n}}=[\zeta^{(n_1-1)/2}]
\Pf\left[\begin{array}{ll}
                                             [\beta_{j,k}+\zeta\alpha_{j,k}]_{j,k=1,\cdots,n}&[\nu_j]_{j=1,\cdots,n} \\
                                             -[\nu_k]_{k=1,\cdots,n}&0
                                           \end{array}
                                     \right],
\end{align*} where $\alpha_{j,k},\ \beta_{j,k} $ are defined in Lemma \ref{lem15} and\begin{align*}&\nu_k:=\int_{\mathbb{R}}\varphi_{k-1}(x)dx.
\end{align*}\end{lem}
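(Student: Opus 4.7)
The proof is the odd-$n_1$ analogue of Lemma~\ref{lem15}, with the modification that after integration over alternate variables in $\Delta_{n_1}$ one charge-$1$ variable is left unpaired; this unpaired variable accounts for the bordering row and column of $[\nu_j]$ in the stated Pfaffian formula, and the coefficient extraction $[\zeta^{(n_1-1)/2}]$ tracks the number of pairs coming from charge-$1$ positions.

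I would begin from formula~\eqref{47}. Applying the method of integration over alternate variables to the $n_1$ ordered charge-$1$ variables (i.e., integrating out $x_2, x_4, \ldots, x_{n_1-1}$, a total of $(n_1-1)/2$ variables, in the simplex $\Delta_{n_1}$), each such integration converts the corresponding column of the determinant into one whose $i$-th entry is $\int_{-\infty}^{x}\varphi_{i-1}(y)\, dy$ for the adjacent kept variable $x$. Expanding the resulting $n\times n$ determinant as a sum over permutations and then integrating each remaining variable over $\mathbb{R}$ produces a sum of products of three types of scalar integrals: the quantities $a_{j,k}$ and $b_{j,k}$ from the proof of Lemma~\ref{lem15} (coming from the charge-$1$ pairs and charge-$2$ positions, respectively), together with one factor $\nu_k=\int_{\mathbb{R}}\varphi_{k-1}(x)\,dx$ arising from integrating the unique unpaired charge-$1$ variable over all of $\mathbb{R}$.

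Next, restricting the sum to permutations with $P(2l)>P(2l-1)$ replaces $a$ by $\alpha$ and $b$ by $\beta$, exactly as in Lemma~\ref{lem15}, but with one leftover unmatched index whose contribution is a $\nu$-factor. The resulting expression is precisely the Pfaffian expansion along the last row and column of the $(n+1)\times(n+1)$ antisymmetric bordered matrix in the statement, where the formal parameter $\zeta$ marks the choice of an $\alpha$-entry over a $\beta$-entry at each paired position; the operator $[\zeta^{(n_1-1)/2}]$ then selects those matchings that use exactly $(n_1-1)/2$ $\alpha$-pairs and $n_2$ $\beta$-pairs, with the unique remaining index matched to the bordering position via a $\nu$-factor. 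The prefactor $(n_1!\,n_2!)c_n$ is identical to that in Lemma~\ref{lem15} because the combinatorial reduction from $n_1!$ ordered configurations to the restricted sum is unchanged by the parity of $n_1$.

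\textbf{Main obstacle.} The most delicate step is the sign bookkeeping. I would need to verify that the signature of the permutation $P$ produced by expanding the determinant and enforcing $P(2l)>P(2l-1)$ agrees precisely with the Pfaffian sign convention on a bordered antisymmetric matrix of one higher dimension. Equivalently, expanding the bordered Pfaffian along its last row and column must reproduce, with the correct sign, the expression obtained from integration over alternate variables with one variable left unpaired. Once this sign identification is carried out, the rest of the argument is a direct transcription of the proof of Lemma~\ref{lem15}.
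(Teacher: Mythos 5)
Your proposal follows the paper's proof essentially verbatim: both apply the method of integration over alternate variables to \eqref{47}, identify the quantities $a_{j,k}$, $b_{j,k}$ together with one extra $\nu$-factor coming from the single unpaired charge-$1$ variable, restrict to $P(2l)>P(2l-1)$ to replace $a,b$ by $\alpha,\beta$, and recognize the result as the bordered $(n+1)\times(n+1)$ Pfaffian with $\zeta$ marking the $\alpha$-pairs. The only step you flag but do not carry out, the sign bookkeeping, is handled in the paper by first cyclically reordering $P(n_1),P(n_1+1),\dots,P(n)\mapsto P(n),P(n_1),\dots,P(n-1)$ and then writing $\nu_{P(n)}=\nu_{P(n),\,n+1}=-\nu_{n+1,\,P(n)}$; also note the paper integrates out $x_1,x_3,\dots,x_{n_1}$ (so the final integration $\int_{x_{n_1-1}}^{+\infty}$ produces the full-line $\nu$-term directly) rather than your interior set $x_2,\dots,x_{n_1-1}$, an inessential choice that only changes the column bookkeeping.
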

\begin{proof}With the same definitions of  $a_{j,k}$ and $b_{j,k}$ as in the proof of Lemma \ref{lem15}, we apply the method of integration over alternate variables again to integrate over $x_1,x_3,\cdots,x_{n_1}$ first, then we expand the resulting determinant and integrate over all the rest variables to get \begin{align*}{G_{n_1,n_2}}=&\frac{(n_1!)c_{n_1+2n_2}}{((n_1-1)/2)!}\sum_{P\in S_{n_1+2n_2}}\varepsilon(P)\nu_{P(n_1)}\times\\&\prod_{l=1}^{(n_1-1)/2}a_{P(2l-1),P(2l)}\prod_{l=1}^{n_2}b_{P(n_1+2l-1),P(n_1+2l)}\\
=&\frac{(n_1!)c_{n_1+2n_2}}{((n_1-1)/2)!}\sum_{P(2l)>P(2l-1);\ l=1,\cdots,(n_1-1)/2+n_2}\varepsilon(P)\nu_{P(n_1+2n_2)}\times\\&\prod_{l=1}^{(n_1-1)/2}\alpha_{P(2l-1),P(2l)}
\prod_{l=(n_1-1)/2}^{(n_1-1)/2+n_2}\beta_{P(2l-1),P(2l)}.
\end{align*}Here, we changed the order $ P(n_1),P(n_1+1),\cdots,P(n_1+2n_2)\to P(n_1+2n_2),P(n_1),\\ \cdots,P(n_1+2n_2-1),$ and made the restriction $P(2l)>P(2l-1) .$ Now we write $\nu_{P(n_1+2n_2)}=\nu_{P(n)}:=\nu_{P(n),n+1}=-\nu_{n+1,P(n)} $ in the above expression, then the result is again a consequence
of the definition of a Pfaffian.\end{proof}Now we need several properties of $\alpha_{j,k},\ \beta_{j,k} $ and $\nu_k$. By \eqref{45}  and \eqref{41}, we first have\begin{align}\label{48}&\beta_{k,k+1}=-\beta_{k+1,k}=\sqrt{2k},\ \beta_{j,k}=0\ \text{for} \ |j-k|\neq 1.
\end{align}We also have the following   \begin{lem}\label{lem17}Let $\alpha_{j,k},\ \beta_{j,k} $ be defined in Lemma \ref{lem15}, $\nu_{k} $ be defined in Lemma \ref{lem16}, and let's define $ \alpha_{0,k}=\alpha_{j,0}=\nu_0=0.$ Then we have

(a) for positive integers $j,k,$ we have \begin{align*}&\sqrt{j-1}\alpha_{j-1,k}-\sqrt{j}\alpha_{j+1,k}=2\sqrt{2}\delta_{jk},\ \sqrt{j-1}\nu_{j-1}-\sqrt{j}\nu_{j+1}=0.
\end{align*}

(b) $\nu_j=0$ for $j$ even;  $\nu_j>0$ for $j$ odd.

(c) $\alpha_{j,k}=\alpha_{1,k}\nu_j/\nu_1$ for $0<j\leq k.$

 (d) If $k$ is odd, then $\alpha_{j,k}=0$ for $0<j\leq k;$ if $k$ is even ($k>0$), then $\alpha_{1,k}>0.$ 
 
  (e) If $n$ is even, $n>0$, $j,k\in\mathbb{Z}\cap[1,n]$, then $$\sum_{l=1}^n\beta_{j,l}\alpha_{l,k}=-4\delta_{jk}.$$\end{lem}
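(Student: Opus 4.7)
The plan is to prove (a)--(e) in the listed order, since each part builds on the preceding ones. For (a), I would integrate by parts. The recurrence \eqref{41} in the form $\sqrt{2}\varphi_{j-1}'(y)=\sqrt{j-1}\varphi_{j-2}(y)-\sqrt{j}\varphi_{j}(y)$ allows me to rewrite
\begin{equation*}
\sqrt{j-1}\alpha_{j-1,k}-\sqrt{j}\alpha_{j+1,k}=\sqrt{2}\int_{\mathbb R^{2}}\varphi_{k-1}(x)\varphi_{j-1}'(y)\sgn(x-y)\,dx\,dy.
\end{equation*}
Since $\partial_{y}\sgn(x-y)=-2\delta(x-y)$ distributionally and $\varphi_{j-1}$ decays at infinity, integrating by parts in $y$ collapses the right-hand side to $2\sqrt{2}\int\varphi_{k-1}(x)\varphi_{j-1}(x)\,dx=2\sqrt{2}\delta_{jk}$ by orthogonality \eqref{45}; the analogous computation with $\varphi_{k-1}(x)$ replaced by $1$ and with no surviving boundary terms gives the $\nu$ identity.

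For (b), $\varphi_{j-1}$ is an odd function exactly when $j$ is even, so $\nu_{j}=0$ in that case; for $j$ odd, the $\nu$ recurrence from (a) gives $\nu_{j}=\sqrt{(j-2)/(j-1)}\,\nu_{j-2}$, and induction from $\nu_{1}=\pi^{1/4}>0$ yields $\nu_{j}>0$. For (c), I would split on the parity of $j$. The recurrence in (a) at any index $\neq k$ is homogeneous and shifts the index by $2$, so for even $j\in(0,k)$ the sequence $\alpha_{2m,k}$ iterates from $\alpha_{0,k}=0$ and vanishes, and the case $j=k$ with $k$ even is covered by the antisymmetry $\alpha_{k,k}=0$ visible after swapping the integration variables; for odd $j\in[1,k]$ the sequences $\alpha_{j,k}$ and $\nu_{j}$ obey the same homogeneous recurrence, so the ratio $\alpha_{j,k}/\nu_{j}$ is constant in odd $j\in[1,k]$, yielding $\alpha_{j,k}=\alpha_{1,k}\nu_{j}/\nu_{1}$. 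This establishes (c) in all cases. Part (d) for $k$ odd is then immediate: at $j=k$, (c) combined with $\alpha_{k,k}=0$ and $\nu_{k}>0$ forces $\alpha_{1,k}=0$, and (c) propagates this to $\alpha_{j,k}=0$ for $0<j\leq k$.

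The main obstacle is the strict positivity $\alpha_{1,k}>0$ when $k$ is even, since the recurrences alone supply no information. Writing $\sgn(x-y)=2\mathbf{1}_{x>y}-1$, the constant piece contributes $-\nu_{1}\nu_{k}=0$ for $k$ even, and the substitution $t=x-y>0$ gives
\begin{equation*}
\alpha_{1,k}=2\int_{0}^{\infty}dt\int_{\mathbb R}\varphi_{k-1}(x)\varphi_{0}(x-t)\,dx.
\end{equation*}
Completing the square in the exponent and using the Hermite translation identity $H_{k-1}(u+t/2)=\sum_{j=0}^{k-1}\binom{k-1}{j}H_{j}(u)\,t^{k-1-j}$ together with $\int H_{j}(u)e^{-u^{2}}\,du=\sqrt{\pi}\,\delta_{j0}$ collapses the inner integral to the manifestly positive $(2^{k-1}(k-1)!)^{-1/2}\,t^{k-1}e^{-t^{2}/4}$, and integration over $t>0$ yields $\alpha_{1,k}>0$.

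Finally, (e) follows directly from (a) and \eqref{48}: since $\beta_{j,l}=0$ for $|j-l|\neq 1$,
\begin{equation*}
\sum_{l=1}^{n}\beta_{j,l}\alpha_{l,k}=-\sqrt{2(j-1)}\,\alpha_{j-1,k}+\sqrt{2j}\,\alpha_{j+1,k}=-\sqrt{2}\bigl(\sqrt{j-1}\,\alpha_{j-1,k}-\sqrt{j}\,\alpha_{j+1,k}\bigr)=-4\delta_{jk}.
\end{equation*}
The only subtlety is at $j=n$, where the $\beta_{n,n+1}$ term is absent from the sum; I would handle this by showing that $\alpha_{n+1,k}=0$ for $1\leq k\leq n$, using the antisymmetry $\alpha_{n+1,k}=-\alpha_{k,n+1}$ and applying (c)--(d) with $k$ replaced by the odd integer $n+1$, so that the missing term is zero and the telescoping still produces $-4\delta_{nk}$.
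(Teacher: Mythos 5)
Your proof is correct and, for parts (a), (b), (c), the $k$-odd half of (d), and (e), follows essentially the same reasoning the paper uses (recurrence from \eqref{41}, the parity of $\varphi_{j-1}$ and the $\nu$-recurrence, iteration of the homogeneous recurrence combined with $\alpha_{k,k}=0$, and extending the sum in (e) to $l=n+1$ after observing $\alpha_{n+1,k}=0$). The one place where you take a genuinely different route is the positivity $\alpha_{1,k}>0$ for $k$ even. The paper stays entirely inside the recurrence machinery: it applies part (a) at $j=k$, obtaining $\sqrt{k-1}\,\alpha_{k-1,k}-\sqrt{k}\,\alpha_{k+1,k}=2\sqrt{2}$, then notes that $k+1$ is odd so by the $k$-odd half of (d) one has $\alpha_{k+1,k}=-\alpha_{k,k+1}=0$, whence $\alpha_{k-1,k}=2\sqrt{2}/\sqrt{k-1}>0$; combining this with (c) (which expresses $\alpha_{k-1,k}=\alpha_{1,k}\nu_{k-1}/\nu_1$ with $\nu_{k-1},\nu_1>0$) yields $\alpha_{1,k}>0$. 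You instead evaluate $\alpha_{1,k}$ directly from the integral definition, using $\sgn(x-y)=2\mathbf{1}_{x>y}-1$, Gaussian convolution, and the Hermite addition theorem $H_{n}(u+v)=\sum_j\binom{n}{j}H_j(u)(2v)^{n-j}$ to collapse the inner integral to a manifestly positive multiple of $t^{k-1}e^{-t^2/4}$. Both approaches are valid; the paper's is shorter and self-contained within the recurrence structure already developed, while yours produces an explicit closed form for $\alpha_{1,k}$ as a one-dimensional integral, which is more information but requires the extra Hermite identity. Two very minor points: the numerical value $\nu_1=\pi^{1/4}$ you quote is off by a factor of $\sqrt{2}$ (one has $\nu_1=\pi^{-1/4}\int e^{-x^2/2}\,dx=\sqrt{2}\,\pi^{1/4}$), but only positivity is used; and the phrase ``the analogous computation with $\varphi_{k-1}(x)$ replaced by $1$'' for the $\nu$-identity is slightly misleading — there is no $\sgn$ and no double integral in the $\nu$ case, one simply integrates $\sqrt{2}\varphi_{j-1}'$ over $\mathbb{R}$ and uses decay.
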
 \begin{proof}Let's define a skew symmetric inner product $ \langle\cdot|\cdot\rangle_1$ by\begin{align*}&\langle f|g\rangle_1:=\int_{\mathbb{R}^2}g(x)f(y)\sgn(x-y)dxdy,
\end{align*}then we have $$\alpha_{j,k}=\langle \varphi_{j-1}|\varphi_{k-1}\rangle_1 =-\alpha_{k,j}.$$  
Thanks to \eqref{45} and $ \lim\limits_{x\to\pm\infty}\varphi_{j}(x)=0$,  we have\begin{align*}\langle \varphi_{j}'|\varphi_{k}\rangle_1&=\int_{\mathbb{R}}dx\varphi_{k}(x)\left(\int_{-\infty}^x\varphi_{j}'(y)dy-
\int_x^{+\infty}\varphi_{j}'(y)dy\right)\\&=\int_{\mathbb{R}}dx\varphi_{k}(x)(2\varphi_{j}(x))=2\delta_{jk}.
\end{align*}Hence, by \eqref{41}, we will have\begin{align*}2\sqrt{2}\delta_{jk}&=\langle \sqrt{2}\varphi_{j}'|\varphi_{k}\rangle_1=\sqrt{j}\langle \varphi_{j-1}|\varphi_{k}\rangle_1-\sqrt{j+1}\langle \varphi_{j+1}|\varphi_{k}\rangle_1\\&=\sqrt{j}\alpha_{j,k+1}-\sqrt{j+1}\alpha_{j+2,k+1},
\end{align*}and thus we conclude the first identity of (a).

Similarly, we have \begin{align*}0&=\int_{\mathbb{R}}\sqrt{2}\varphi_{j}'(x)dx=
\int_{\mathbb{R}}(\sqrt{j}\varphi_{j-1}(x)-\sqrt{j+1}\varphi_{j+1}(x))dx=\sqrt{j}\nu_{j}-\sqrt{j+1}\nu_{j+2},
\end{align*} which implies the second identity of (a).

If $j$ is even,  we have\begin{align*}&\nu_{j}=\nu_0\prod_{l=0}^{(j-2)/2}\frac{\sqrt{2l}}{\sqrt{2l+1}}=0.
\end{align*}By \eqref{vard}, we have $\varphi_{0}(x)>0 $, and thus $$\nu_1=\int_{\mathbb{R}}\varphi_{0}(x)dx>0, $$ therefore, for $j$ odd, we have \begin{align*}&\nu_{j}=\nu_1\prod_{l=1}^{(j-1)/2}\frac{\sqrt{2l-1}}{\sqrt{2l}}>0.
\end{align*}This shows that (b) is true.

By (a) where $\sqrt{j-1}\alpha_{j-1,k}-\sqrt{j}\alpha_{j+1,k}=0 $ for $0<j<k,$ we will have \begin{align*}&\alpha_{j,k}=\alpha_{0,k}\prod_{l=0}^{(j-2)/2}\frac{\sqrt{2l}}{\sqrt{2l+1}}=0=\alpha_{1,k}\nu_j/\nu_1\ \text{for} \ j\ \text{even},\ 0<j\leq k,\\&\alpha_{j,k}=\alpha_{1,k}\prod_{l=1}^{(j-1)/2}\frac{\sqrt{2l-1}}{\sqrt{2l}}=\alpha_{1,k}\nu_j/\nu_1\ \text{for} \ j\ \text{odd},\ 0<j\leq k,
\end{align*}and thus (c) is true.

Since $\alpha_{j,k}=-\alpha_{k,j} $, we have $\alpha_{k,k}=0. $ If $k$ is odd,  then $0=\alpha_{k,k}=\alpha_{1,k}\nu_k/\nu_1$ and $\nu_1>0\,, \nu_k>0,$ then we must have $\alpha_{1,k}=0 $ and $\alpha_{j,k}=\alpha_{1,k}\nu_j/\nu_1=0 $ for $0<j\leq k$. If $k$ is even ($k>0$), then $k\pm1$ are odd and thus $\alpha_{k,k+1} =0=-\alpha_{k+1,k},\ \nu_{k-1}>0.$ By (a),  we have\begin{align*}&\sqrt{k-1}\alpha_{k-1,k}=\sqrt{k-1}\alpha_{k-1,k}-\sqrt{k}\alpha_{k+1,k}=2\sqrt{2}\delta_{kk}=2\sqrt{2}
\end{align*}and $ \alpha_{1,k}\nu_{k-1}/\nu_1=\alpha_{k-1,k}>0$. Thus we must have $\alpha_{1,k}>0$, which completes (d).

Now we assume that $n$ is even, $n>0$, $j,k\in\mathbb{Z}\cap[1,n]$, then $n+1>k$ and $n+1$ is odd.  By (d), we have $\alpha_{n+1,k}=-\alpha_{k,n+1}=0.$ 
Thus by \eqref{48} and (a), we have \begin{align*}\sum_{l=1}^n\beta_{j,l}\alpha_{l,k}&=\sum_{l=1}^{n+1}\beta_{j,l}\alpha_{l,k}
=-\sqrt{2(j-1)}\alpha_{j-1,k}+\sqrt{2j}\alpha_{j+1,k}
\\&=(-\sqrt{2})\cdot2\sqrt{2}\delta_{jk}=-4\delta_{jk},
\end{align*}which is (e). \end{proof}For the evaluation of Pfaffians, we need the following abstract result.  \begin{lem}\label{lem18}Let $\alpha_{j,k},\ \beta_{j,k} $ be defined for positive integers $j,k$ such that $\alpha_{k,j}=-\alpha_{j,k},\ \beta_{k,j}=-\beta_{j,k}$ and $\beta_{j,k}=0
$ for $|j-k|\neq 1$. Let \begin{align}\label{50}&A_n=[\alpha_{j,k}]_{j,k=1,\cdots,n},\ B_n=[\beta_{j,k}]_{j,k=1,\cdots,n},\ B_n'=\text{diag}(B_{n-1},0)
\end{align}be $n\times n$ antisymmetric matrices. Let's denote $$D_n(\lambda):=\det(B_n+2\lambda I_n),\ D_0(\lambda):=1,$$where $I_n$ is the identity matrix, then we have \begin{align}\label{51}&D_{n+1}(\lambda)=2\lambda D_{n}(\lambda)+\beta_{n,n+1}^2 D_{n-1}(\lambda)\ \text{for}\ n\in\mathbb{Z},\ n>0.
\end{align} If $n>0$ is even, then we have (let's define $\Pf(B_{0}+\lambda A_{0}):=1$)\begin{align}\label{52}&\Pf(B_n+\lambda A_n)=\Pf(B_n'+\lambda A_n)+\beta_{n-1,n}\Pf(B_{n-2}+\lambda A_{n-2}).
\end{align} Moreover, if $n>0$ is even and $B_nA_n=-4I_n$,  then we have \begin{align}\label{53}&\Pf(B_n+\lambda^2 A_n)=D_n(\lambda)/(\Pf B_n)\end{align}and \begin{align}\label{909} \Pf(B_n'+\lambda^2 A_n)=2\lambda D_{n-1}(\lambda)/(\Pf B_n).
\end{align}\end{lem}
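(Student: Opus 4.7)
The plan is to dispatch the four assertions in sequence, with the two identities under the hypothesis $B_n A_n=-4I_n$ requiring the main work.

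For the three-term recurrence \eqref{51}, since $\beta_{j,k}=0$ for $|j-k|\neq1$, the matrix $B_n+2\lambda I_n$ is tridiagonal with $2\lambda$ on the diagonal, so a cofactor expansion of $\det(B_{n+1}+2\lambda I_{n+1})$ along its last row $(0,\dots,0,-\beta_{n,n+1},2\lambda)$, followed by one more cofactor step on the resulting minor, gives the recurrence directly. For the Pfaffian splitting \eqref{52}, I would use the combinatorial definition of $\Pf$ as a signed sum over perfect matchings of $\{1,\dots,n\}$ and partition the sum by whether the pair $\{n-1,n\}$ appears: matchings containing this pair contribute $(\beta_{n-1,n}+\lambda\alpha_{n-1,n})\Pf(B_{n-2}+\lambda A_{n-2})$ in $\Pf(B_n+\lambda A_n)$ and $\lambda\alpha_{n-1,n}\Pf(B_{n-2}+\lambda A_{n-2})$ in $\Pf(B_n'+\lambda A_n)$, while matchings that avoid $\{n-1,n\}$ contribute identically, because the only entry in which $B_n$ and $B_n'$ differ is $(n-1,n)$ (the other $(j,n)$ entries of $B_n$ vanish by the banded hypothesis, hence agree with those of $B_n'$).

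For \eqref{53}, the algebraic core is the factorization
\[
B_n(B_n+\lambda^2 A_n)=B_n^2-4\lambda^2 I_n=(B_n+2\lambda I_n)(B_n-2\lambda I_n),
\]
which uses $B_n A_n=-4I_n$. Taking determinants and using that $B_n$ is antisymmetric of even order (so its nonzero eigenvalues come in pairs $\pm i\mu_j$ and $D_n(-\lambda)=D_n(\lambda)$), one gets $\det(B_n)\det(B_n+\lambda^2 A_n)=D_n(\lambda)^2$. Applying $(\Pf M)^2=\det M$ and matching signs at $\lambda=0$ (both sides equal $(\Pf B_n)^2=D_n(0)$) yields \eqref{53}.

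For \eqref{909} I would adapt the same idea, now computing $B_n(B_n'+\lambda^2 A_n)=B_n B_n'-4\lambda^2 I_n$. Writing $B_n=B_n'+C$ where $C$ is supported only at $(n-1,n)$ and $(n,n-1)$, a direct check shows that $CB_n'$ has a single nonzero entry at position $(n,n-2)$ with value $\beta_{n-1,n}\beta_{n-2,n-1}$, because the only surviving neighbor of index $n-1$ inside $B_n'$ is $n-2$. Consequently
\[
B_n B_n'-4\lambda^2 I_n=\begin{pmatrix} B_{n-1}^2-4\lambda^2 I_{n-1} & 0 \\ r^T & -4\lambda^2 \end{pmatrix}
\]
is block lower triangular, and its determinant is $-4\lambda^2\,D_{n-1}(\lambda)D_{n-1}(-\lambda)$. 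Since $B_{n-1}$ is antisymmetric of \emph{odd} order, $D_{n-1}$ is odd in $\lambda$, so $D_{n-1}(-\lambda)=-D_{n-1}(\lambda)$ and the determinant simplifies to $(2\lambda D_{n-1}(\lambda))^2$, giving $(\Pf(B_n)\Pf(B_n'+\lambda^2 A_n))^2=(2\lambda D_{n-1}(\lambda))^2$.

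The main obstacle is pinning down the sign in \eqref{909}, since the squared identity only fixes it up to $\pm 1$. I would resolve this by matching the $\lambda^n$ leading coefficient of each side: on the right it equals $2^n$, and on the left $\Pf(B_n)\cdot\Pf(A_n)$ (the leading term of $\Pf(B_n'+\lambda^2 A_n)$ in $\lambda$ being $\lambda^n\Pf(A_n)$). Using $A_n=-4B_n^{-1}$ together with the identity $\Pf(M)\Pf(M^{-1})=(-1)^{n/2}$ for any antisymmetric invertible $M$ of even order (reducible to the standard symplectic form $J_n$ by a Darboux change of basis), one gets $\Pf(B_n)\Pf(A_n)=(-4)^{n/2}\Pf(B_n)\Pf(B_n^{-1})=(-1)^n 4^{n/2}=2^n$, fixing the positive sign; the base case $n=2$ provides a direct consistency check, as $\Pf(B_2)\Pf(B_2'+\lambda^2 A_2)=\beta_{12}\cdot\lambda^2\alpha_{12}=4\lambda^2=2\lambda D_1(\lambda)$.
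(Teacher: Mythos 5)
Your proposal is correct, and for the two Pfaffian identities under $B_nA_n=-4I_n$ it takes a genuinely different route from the paper. For \eqref{51} and \eqref{52} the ideas coincide (cofactor and Pfaffian Laplace expansions; your partitioning of perfect matchings by whether $\{n-1,n\}$ occurs is precisely the Laplace expansion the paper cites from Forrester). For \eqref{53} both proofs start from the same algebraic kernel, $B_n^2-4\lambda^2 I_n=(B_n+2\lambda I_n)(B_n-2\lambda I_n)$, but the paper never leaves the Pfaffian level: it rewrites $B_n+\lambda^2 A_n=-(B_n+2\lambda I_n)^T B_n^{-1}(B_n+2\lambda I_n)$ and applies $\Pf(B^TXB)=(\det B)\Pf X$, so the sign is produced automatically, whereas you pass to determinants and must un-square and then fix a sign by evaluating at $\lambda=0$. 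For \eqref{909} the divergence is sharper: the paper bootstraps from \eqref{51}, \eqref{52}, \eqref{53} via the recursion $\Pf(B_n'+\lambda^2A_n)=\Pf(B_n+\lambda^2A_n)-\beta_{n-1,n}\Pf(B_{n-2}+\lambda^2A_{n-2})$ together with $\Pf B_n=\beta_{n-1,n}\Pf B_{n-2}$ and the determinant three-term recurrence, which is short but tacitly applies \eqref{53} at size $n-2$ and hence uses $B_{n-2}A_{n-2}=-4I_{n-2}$ (true in the application via Lemma \ref{lem17}(e), but not literally part of the hypothesis of the lemma); your direct computation of $B_nB_n'$ and its block-lower-triangular form uses only the stated hypothesis $B_nA_n=-4I_n$, at the cost of the more delicate leading-coefficient sign argument via $\Pf(M)\Pf(M^{-1})=(-1)^{n/2}$. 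Both approaches are sound; the paper's is shorter once \eqref{53} is in hand, while yours is more self-contained in its hypotheses.
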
\begin{proof}The formula \eqref{51} follows from the Laplace expansion of the determinant in the $(n+1)$-th row of $ B_{n+1}+2\lambda I_{n+1} .$ The formula \eqref{52} follows from the Laplace expansion of the Pfaffian (see (6.36) in \cite{For}). Now we assume that $n>0$ is even and $B_nA_n=-4I_n$, then $B_n$ is invertible, $A_n=-4B_n^{-1}$ and\begin{align*}B_n+\lambda^2 A_n&=B_n-4\lambda^2B_n^{-1}=(B_n-2\lambda I_n)B_n^{-1}(B_n+2\lambda I_n)\\&=-(B_n+2\lambda I_n)^TB_n^{-1}(B_n+2\lambda I_n),
\end{align*}here we used the fact that $B_n$ is antisymmetric. By \eqref{49} we have\begin{align*}\Pf(B_n+\lambda^2 A_n)=(-1)^{n/2}\det(B_n+2\lambda I_n)\Pf(B_n^{-1}).
\end{align*}Taking $\lambda=0,$ we have $\Pf(B_n)=(-1)^{n/2}\det(B_n)\Pf(B_n^{-1}). $ Since $B_n$ is invertible, by \eqref{49} again,  we have $\det(B_n)=(\Pf B_n)^2\neq 0$, and thus $(-1)^{n/2}\Pf(B_n^{-1})=(\Pf B_n)^{-1}.$ Therefore, we have \begin{align*}\Pf(B_n+\lambda^2 A_n)=\det(B_n+2\lambda I_n)(\Pf B_n)^{-1}=D_n(\lambda)/(\Pf B_n),
\end{align*}which is \eqref{53}. 
By definition, the above result is also true for $n=0.$ By  definition  of a Pfaffian and the fact that $\beta_{j,k}=0
$ for $|j-k|\neq 1,$ we have \begin{align*}\Pf B_n=\prod_{j=1}^{n/2}\beta_{2j-1,2j},\ \Pf B_n=\beta_{n-1,n}\Pf B_{n-2}.
\end{align*}Combining this with  \eqref{51}, \eqref{52} and \eqref{53}, we have\begin{align*}&\Pf(B_n'+\lambda^2 A_n)=\Pf(B_n+\lambda^2 A_n)-\beta_{n-1,n}\Pf(B_{n-2}+\lambda^2 A_{n-2})\\&=D_n(\lambda)/(\Pf B_n)-\beta_{n-1,n}D_{n-2}(\lambda)/(\Pf B_{n-2})\\&=D_n(\lambda)/(\Pf B_n)-\beta_{n-1,n}^2D_{n-2}(\lambda)/(\Pf B_{n})=2\lambda D_{n-1}(\lambda)/(\Pf B_n),
\end{align*}which is \eqref{909}. This completes the proof.\end{proof}We also need to evaluate the determinant $D_n(\lambda). $\begin{lem}\label{lem19}Let $\beta_{j,k} $ be defined in Lemma \ref{lem15}, i.e. $\beta_{j,k} $ satisfies \eqref{48}. Let's denote $B_n=[\beta_{j,k}]_{j,k=1,\cdots,n}$ and $D_n(\lambda)=\det(B_n+2\lambda I_n)$ with $D_0(\lambda)=1,$ then we have\begin{align*}&D_{n}(\lambda)=\sum_{m=0}^{[n/2]}2^{n-m}{n\choose 2m}\frac{(2m)!}{2^mm!}\lambda^{n-2m}.
\end{align*}
\end{lem}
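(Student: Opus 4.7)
The strategy is to combine the three-term recurrence from Lemma \ref{lem18} with the explicit values of the $\beta_{j,k}$ from \eqref{48}, and then verify the proposed formula by induction on $n$. Recalling that $\beta_{n,n+1}^2 = 2n$ by \eqref{48}, the recurrence \eqref{51} becomes
$$D_{n+1}(\lambda) = 2\lambda\, D_n(\lambda) + 2n\, D_{n-1}(\lambda), \qquad n \geq 1,$$
which is the key identity I would use. Notice that this is exactly the Hermite recurrence \eqref{40} with a sign flipped in the second term, so structurally the claim is that the coefficients of $D_n$ coincide with those of $H_n$ in absolute value.

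First I would dispose of the base cases. For $n=0$ the formula evaluates to $1 = D_0(\lambda)$ by definition. For $n=1$, since $\beta_{1,1}=0$, we have $B_1 = [0]$ and so $D_1(\lambda) = 2\lambda$, matching $2^1\binom{1}{0}\lambda$. For the inductive step, write $c_{n,m} := 2^{n-m}\binom{n}{2m}\frac{(2m)!}{2^m m!}$ for the conjectured coefficient of $\lambda^{n-2m}$ in $D_n(\lambda)$. Assuming the formula holds for $n-1$ and $n$, I would extract the coefficient of $\lambda^{n+1-2m}$ on both sides of the recurrence; the task reduces to checking
$$c_{n+1,m} = 2\,c_{n,m} + 2n\,c_{n-1,m-1}$$
with the convention $c_{n,m}=0$ when $2m>n$ or $m<0$. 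After dividing by the common factor $2^{n-m+1}\frac{(2m)!}{2^m m!}$ and using $\frac{n}{2m-1}\binom{n-1}{2m-2} = \binom{n}{2m-1}$, this collapses to Pascal's identity
$$\binom{n+1}{2m} = \binom{n}{2m} + \binom{n}{2m-1},$$
and the induction closes.

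There is no real obstacle here; the heavy lifting has already been done in Lemma \ref{lem18}, and the combinatorial verification is routine. As a conceptual cross-check, one may observe that $D_n(\lambda)$ and $i^{-n}H_n(i\lambda)$ satisfy the same three-term recurrence and have the same first two values, so they agree for all $n$; substituting $i\lambda$ into \eqref{54} and using $i^{-2m}=(-1)^m$ to cancel the alternating signs in the Hermite expansion yields precisely the claimed closed form.
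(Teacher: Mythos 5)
Your proof is correct. You and the paper start identically: both use \eqref{48} to specialize the recurrence \eqref{51} to $D_{n+1}(\lambda)=2\lambda D_n(\lambda)+2nD_{n-1}(\lambda)$ and check the same base cases $D_0=1$, $D_1=2\lambda$. From there the routes diverge slightly. Your primary argument is a direct coefficient induction: you extract the coefficient of $\lambda^{n+1-2m}$, cancel the common factor $2^{n+1-m}(2m)!/(2^mm!)$, use $\frac{n}{2m-1}\binom{n-1}{2m-2}=\binom{n}{2m-1}$, and reduce to Pascal's identity; I checked the arithmetic and it closes cleanly. The paper instead sets $\widetilde H_n(x):=i^{-n}D_n(ix)$, observes that $\widetilde H_n$ satisfies the Hermite recurrence \eqref{40} with matching initial data, concludes $\widetilde H_n=H_n$, and then substitutes into \eqref{54}. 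What you call a ``conceptual cross-check'' at the end is in fact verbatim the paper's entire proof. The trade-off is minor: your induction is self-contained and avoids complex substitution, while the paper's route is shorter once \eqref{54} is in hand and makes the structural link to Hermite polynomials explicit, which is thematically consistent with how $H_n$ is used elsewhere in the paper (e.g. in \S\ref{pre} and the proof of Lemma \ref{lem9}). Either is a complete and valid proof.
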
\begin{proof}By \eqref{48} and \eqref{51},  we have\begin{align*}D_{n+1}(\lambda)=2\lambda D_{n}(\lambda)+2n D_{n-1}(\lambda)\ \text{for}\ n\in\mathbb{Z},\ n>0.
\end{align*}Let $\widetilde{H}_n(x)=i^{-n}D_n(ix)$, then we have\begin{align*}\widetilde{H}_{n+1}(x)=2x \widetilde{H}_{n}(x)-2n \widetilde{H}_{n-1}(x)\ \text{for}\ n\in\mathbb{Z},\ n>0.
\end{align*}Moreover, we have $D_0(\lambda)=1,\ B_1=(0),\ D_1(\lambda)=2\lambda;\ \widetilde{H}_0(x)=1,\ \widetilde{H}_1(x)=2x. $ Thus $ \widetilde{H}_n$ satisfy the same iteration formula and initial condition as the Hermite polynomials $ H_n$ (recall \eqref{40}), which implies that $\widetilde{H}_n(x)={H}_n(x). $ By \eqref{54} we have\begin{align*}&D_{n}(\lambda)=i^n\widetilde{H}_n(-i\lambda)=i^nH_n(-i\lambda)\\=&\sum_{m=0}^{[n/2]}i^n(-1)^m2^{n-m}{n\choose 2m}\frac{(2m)!}{2^mm!}(-i\lambda)^{n-2m}\\=&\sum_{m=0}^{[n/2]}2^{n-m}{n\choose 2m}\frac{(2m)!}{2^mm!}\lambda^{n-2m},
\end{align*}which completes the proof.\end{proof}Now we give the proof of Lemma \ref{lem9}.\begin{proof}Let $\alpha_{j,k},\ \beta_{j,k} $ be defined in Lemma \ref{lem15}, $\nu_{k} $ be defined in Lemma \ref{lem16}, and $A_n,\ B_n,\ B_n'$ be defined in \eqref{50}. If $n$ is even, then by (e) of Lemma \ref{lem17},  we have $B_nA_n=-4I_n$. By Lemma \ref{lem15}, Lemma \ref{lem18} and Lemma \ref{lem19}, we have\begin{align*}{G_{n-2k,k}}&=(n-2k)!k!c_{n}[\zeta^{n/2-k}]\Pf[\beta_{j,l}+\zeta\alpha_{j,l}]_{j,l=1,\cdots,n}\\&
=(n-2k)!k!c_{n}[\zeta^{n/2-k}]\Pf(B_n+\zeta A_n)\\&
=(n-2k)!k!c_{n}[\zeta^{n-2k}]\Pf(B_n+\zeta^2 A_n)\\&
=(n-2k)!k!c_{n}[\zeta^{n-2k}]D_n(\zeta)/(\Pf B_n)\\&
=(n-2k)!k!c_{n}2^{n-k}{n\choose 2k}\frac{(2k)!}{2^kk!}(\Pf B_n)^{-1}\\&
=c_{n}2^{n-2k}n!(\Pf B_n)^{-1},
\end{align*}and thus$${G_{n-2k,k}}=2^{-2k}G_{n,0}=2^{-2k}G_{n}. $$

If $n$ is odd, by Lemma \ref{lem16}, we first have\begin{align*}&\frac{G_{n-2k,k}}{(n-2k)!k!c_{n}}=[\zeta^{(n-2k-1)/2}]
\Pf\left[\begin{array}{ll}
                                             [\beta_{j,l}+\zeta\alpha_{j,l}]_{j,l=1,\cdots,n}&[\nu_j]_{j=1,\cdots,n} \\
                                             -[\nu_l]_{l=1,\cdots,n}&0
                                           \end{array}
                                     \right].
\end{align*}By Lemma \ref{lem17}, we also have $B_{n+1}A_{n+1}=-4I_{n+1}$, $\alpha_{j,n+1}=\alpha_{1,n+1}\nu_j/\nu_1=-\alpha_{n+1,j}$ for $0<j\leq n,$ and $\alpha_{1,n+1}>0,\ \nu_1>0. $ By   definition, $\Pf X$ is linear with respect to the last row of $X,$ thus for $ \lambda:=\zeta\alpha_{1,n+1}/\nu_1$,  we  have\begin{align*}&\lambda
\Pf\left[\begin{array}{ll}
                                             [\beta_{j,l}+\zeta\alpha_{j,l}]_{j,l=1,\cdots,n}&[\nu_j]_{j=1,\cdots,n} \\
                                             -[\nu_l]_{l=1,\cdots,n}&0
                                           \end{array}
                                     \right]\\=&
\Pf\left[\begin{array}{ll}
                                             [\beta_{j,l}+\zeta\alpha_{j,l}]_{j,l=1,\cdots,n}&\lambda[\nu_j]_{j=1,\cdots,n} \\
                                             -\lambda[\nu_l]_{l=1,\cdots,n}&0
                                           \end{array}
                                     \right]\\=&
\Pf\left[\begin{array}{ll}
                                             [\beta_{j,l}+\zeta\alpha_{j,l}]_{j,l=1,\cdots,n}&[\zeta\alpha_{j,n+1}]_{j=1,\cdots,n} \\
                                             \ [\zeta\alpha_{n+1,l}]_{l=1,\cdots,n}&0
                                           \end{array}
                                     \right]\\=&\Pf(B_{n+1}'+\zeta A_{n+1}),
\end{align*}where $B_{n+1}'=\text{diag}(B_{n},0)$. Hence,  by  Lemma \ref{lem18} and Lemma \ref{lem19}, we have \begin{align*}&\frac{\alpha_{1,n+1} G_{n-2k,k}}{\nu_1(n-2k)!k!c_{n}}\\=&\frac{\alpha_{1,n+1}}{\nu_1}[\zeta^{(n-2k-1)/2+1}]
\zeta\Pf\left[\begin{array}{ll}
                                             [\beta_{j,l}+\zeta\alpha_{j,l}]_{j,l=1,\cdots,n}&[\nu_j]_{j=1,\cdots,n} \\
                                             -[\nu_l]_{l=1,\cdots,n}&0
                                           \end{array}
                                     \right]\\=&[\zeta^{(n-2k-1)/2+1}]\lambda
\Pf\left[\begin{array}{ll}
                                             [\beta_{j,l}+\zeta\alpha_{j,l}]_{j,l=1,\cdots,n}&[\nu_j]_{j=1,\cdots,n} \\
                                             -[\nu_l]_{l=1,\cdots,n}&0
                                           \end{array}
                                     \right]\\=&
                                     [\zeta^{(n-2k-1)/2+1}]\Pf(B_{n+1}'+\zeta A_{n+1})\\=&[\zeta^{n-2k+1}]\Pf(B_{n+1}'+\zeta^2 A_{n+1})\\=&[\zeta^{n-2k+1}](2\zeta D_n(\zeta)/(\Pf B_{n+1}))\\=&2[\zeta^{n-2k}] D_n(\zeta)/(\Pf B_{n+1})\\=&\frac{2^{n-k+1}}{\Pf B_{n+1}}{n\choose 2k}\frac{(2k)!}{2^kk!}.
\end{align*}Therefore, we have$$ G_{n-2k,k}=\frac{2^{n-2k+1}n!\nu_1c_{n}}{\alpha_{1,n+1}\Pf B_{n+1}},$$
which implies $${G_{n-2k,k}}=2^{-2k}G_{n,0}=2^{-2k}G_{n}.$$
 This completes the whole proof of Lemma \ref{lem9}.\end{proof}


\section{Auxiliary point processes}\label{aux}
We need to introduce two more auxiliary point processes to derive the main result.  First, instead of $\chi^{(n)}$ (recall \eqref{chiii}),
it is more convenient to consider the point process  defined as \begin{align*}&\widetilde{\chi}^{(n)}=\sum_{i< j}\delta_{n|\lambda_i-\lambda_j|}=\sum_{\lambda_i>\lambda_j}\delta_{n(\lambda_i-\lambda_j)}.
\end{align*}Then we have $$\chi^{(n)}\leq\widetilde{\chi}^{(n)}, $$ in fact,  we can write $$\widetilde{\chi}^{(n)}=\sum\limits_{j=1}^{n-1}\widetilde{\chi}^{(n,j)} $$such that\begin{align*}&\widetilde{\chi}^{(n,j)}=\sum_{i=1}^{n-j}\delta_{n(\lambda_{(i+j)}-\lambda_{(i)})}.
\end{align*}For any Borel set $ B\subset\mathbb{R}$, we have $$\widetilde{\chi}^{(n,1)}= {\chi}^{(n)} \,\,\mbox{and}\,\, 0\leq\widetilde{\chi}^{(n,j)}(B)\leq n .$$ For the auxiliary point process $\widetilde\chi^{(n)}\geq \chi^{(n)}$,  we will prove that $\widetilde\chi^{(n)}- \chi^{(n)}\to 0$ as $n\to \infty$ almost surely (see Lemma \ref{lem4}), which indicates that there is no successive small gaps.

We now introduce another auxiliary point process as \begin{align*}&\rho^{(k,n)}=\sum_{i_1,\cdots,i_{2k}\ \text{all distinct},\ i_{2j-1}<i_{2j} }\delta_{(n|\lambda_{i_1}-\lambda_{i_2}|,\cdots,n|\lambda_{i_{2k-1}}-\lambda_{i_{2k}}|)}.
\end{align*}  
The following lemma gives the estimates of $\rho^{(k,n)}$ in terms of $\widetilde{\chi}^{(n)},$ and we will see that 
$\rho^{(k,n)} $ is basically equivalent to the factorial moment of $\widetilde{\chi}^{(n)} $ (see \eqref{25}).
\begin{lem}\label{lem14} For any bounded interval  $ A\subset\mathbb{R}_{+}$, we have \begin{equation}\label{leaa}\rho^{(k,n)}(A^k)\leq\frac{(\widetilde{\chi}^{(n)}(A))!}{(\widetilde{\chi}^{(n)}(A)-k)!}.
\end{equation}Given $c_1$ such that $A\subset(0,c_1)$, let's denote $c_n=c_1n^{-1}$ and \begin{equation}a=\max\{i-j:i,j\in\mathbb{Z}\cap[1,n],\ \lambda_{(i)}-\lambda_{(j)}< 2c_n \}.
\end{equation}If $c_n\in (0,1)$, then we have\begin{equation}\label{aaad}0\leq\frac{(\widetilde{\chi}^{(n)}(A))!}{(\widetilde{\chi}^{(n)}(A)-k)!}-\rho^{(k,n)}(A^k)
\leq k(k-1)(a-1)(\widetilde{\chi}^{(n)}(A))^{k-1}
\end{equation}and\begin{equation}\label{add}\rho^{(k,n)}(A^k)\geq(\widetilde{\chi}^{(n)}(A))^k-k(k-1)a(\widetilde{\chi}^{(n)}(A))^{k-1}.
\end{equation}Moreover, let $A_1=(0,2c_1)$, then we have \begin{equation}\label{last}\rho^{(k,n)}(A_1^k)\geq\frac{(a+1)!}{(a+1-2k)!2^k}.
\end{equation}\end{lem}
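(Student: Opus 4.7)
The plan is to extract, from the definition of $a$, a cluster of $a+1$ eigenvalues whose pairwise scaled differences all lie in $A_1$, and then count how many admissible $2k$-tuples $(i_1,\dots,i_{2k})$ can be assembled from this cluster.

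First I would unpack the definition of $a$. By the very definition, there exist indices $j_0$ and $i_0=j_0+a$ in $\mathbb{Z}\cap[1,n]$ with $\lambda_{(i_0)}-\lambda_{(j_0)}<2c_n$. Since the sorted eigenvalues are monotone, the $a+1$ consecutive ones $\lambda_{(j_0)},\lambda_{(j_0+1)},\dots,\lambda_{(j_0+a)}$ all lie in a single interval of length strictly less than $2c_n$. Let $T\subset\{1,\dots,n\}$ collect the original (unsorted) indices of these $a+1$ eigenvalues; then $|T|=a+1$ and for any two distinct $i,j\in T$ we have $0<n|\lambda_i-\lambda_j|<2c_1$, so $n|\lambda_i-\lambda_j|\in(0,2c_1)=A_1$.

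Next I would count admissible tuples drawn from $T$. Any ordered tuple $(i_1,\dots,i_{2k})$ of pairwise distinct elements of $T$ with $i_{2j-1}<i_{2j}$ automatically contributes $1$ to $\rho^{(k,n)}(A_1^k)$ by the observation of the previous paragraph. The number of such tuples is
\[
\binom{a+1}{2k}\cdot\frac{(2k)!}{2^k}=\frac{(a+1)!}{(a+1-2k)!\,2^k},
\]
because one chooses a $2k$-subset of $T$, arranges it in any of $(2k)!$ orders, and then enforces the $k$ internal ordering constraints, each of which halves the count. Summing the contributions of these tuples yields the claimed lower bound on $\rho^{(k,n)}(A_1^k)$.

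The whole argument is essentially combinatorial and I do not expect a serious obstacle. The only point that needs care is to verify that the constraints $i_{2j-1}<i_{2j}$ divide the arrangements by exactly $2^k$ (each pair can be swapped independently, and exactly one of the two swapped versions satisfies the constraint), and to handle the edge case $a+1<2k$: there the binomial coefficient vanishes by the usual convention, no admissible tuple can be formed, and the asserted inequality is trivial.
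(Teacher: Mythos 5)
Your argument for the final inequality \eqref{last} is correct and is essentially the same as the paper's: identify the set of (at most $a+1$) indices whose sorted eigenvalues lie in a common window of length $<2c_n$, observe that every ordered pair drawn from this set has scaled gap in $A_1$, and count the ordered $2k$-tuples with distinct entries and the internal constraints $i_{2j-1}<i_{2j}$, giving $\binom{a+1}{2k}(2k)!/2^k=\dfrac{(a+1)!}{(a+1-2k)!\,2^k}$. The degenerate case $a+1<2k$ is correctly dismissed. The small implicit use that eigenvalues are a.s.\ distinct (so that $n|\lambda_i-\lambda_j|>0$ for $i\neq j\in T$) is fine.

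However, the lemma contains four assertions, and you have only proved the last one. Inequalities \eqref{leaa}, \eqref{aaad} and \eqref{add} are entirely unaddressed, and \eqref{aaad} is where the real work of the lemma lies. To prove \eqref{leaa} one compares the set $X_{2,A}$ of $2k$-tuples counted by $\rho^{(k,n)}(A^k)$ (all $2k$ indices pairwise distinct) with the larger set $X_{1,A}$ in which only the $k$ unordered pairs $\{i_{2j-1},i_{2j}\}$ are required to be distinct; the latter has cardinality exactly $\widetilde{\chi}^{(n)}(A)!/(\widetilde{\chi}^{(n)}(A)-k)!$. To prove \eqref{aaad} one must bound $|X_{1,A}\setminus X_{2,A}|$ by decomposing it into the sets $Y_{j,l,A}$ where the $j$-th and $l$-th pairs share an index, reducing by symmetry to $Y_{1,2,A}$, and then showing that for a fixed pair $(i_1,i_2)$ with scaled gap in $A$, the number of overlapping pairs $(i_3,i_4)$ with scaled gap in $A$ is at most $2(a-1)$. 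The key to that bound is exactly the observation that $A\subset(0,c_1)$ forces both $i_3$ and $i_4$ to lie in a window of sorted indices of width at most $a$ around $i_1$ (resp.\ $i_2$), so the neighborhoods $T_1,T_2$ each have size at most $a$. Inequality \eqref{add} then follows from \eqref{aaad} together with the elementary bound $\prod_{j=0}^{k-1}(1-j/m)\geq 1-\sum_{j=0}^{k-1}j/m$. None of this structure appears in your proposal, so as it stands the proof is incomplete: it establishes \eqref{last} only.
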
\begin{proof}Let's denote \begin{align*}&X_1=\{(i_1,\cdots,i_{2k}):i_j\in\mathbb{Z}, 1\leq i_j\leq n,\ \forall\ 1\leq j\leq 2k,\\ &i_{2j-1}< i_{2j},\ \forall\ 1\leq j\leq k,\ \ \{i_{2j-1}, i_{2j}\}\neq \{i_{2l-1}, i_{2l}\}, \ \forall\ 1\leq j<l\leq k\},\\&X_2=\{(i_1,\cdots,i_{2k}):i_j\in\mathbb{Z}, 1\leq i_j\leq n,\ \forall\ 1\leq j\leq 2k,\\ &i_{2j-1}< i_{2j},\ \forall\ 1\leq j\leq k,\ \ i_{j}\neq i_{l},\  \forall\ 1\leq j<l\leq 2k\},\\&Y_{j,l}=\{(i_1,\cdots,i_{2k})\in X_1:\{i_{2j-1}, i_{2j}\}\cap \{i_{2l-1}, i_{2l}\}\neq \emptyset\},
\end{align*}then we have $$X_2\subseteq X_1\,\,\,\mbox{and} \,\,\,X_1\setminus X_2=\cup_{1\leq j<l\leq k}Y_{j,l}.$$ Let\begin{align*}&X_{m,A}=\{(i_1,\cdots,i_{2k})\in X_m:n|\lambda_{i_{2j-1}}-\lambda_{i_{2j}}|\in A,\ \forall\ 1\leq j\leq k\},\ m=1,2,\\&Y_{m,l,A}=\{(i_1,\cdots,i_{2k})\in Y_{m,l}:n|\lambda_{i_{2j-1}}-\lambda_{i_{2j}}|\in A,\ \forall\ 1\leq j\leq k\},
\end{align*}then we have $$\rho^{(k,n)}(A^k)=|X_{2,A}|,\  X_{2,A}\subseteq X_{1,A}\,\,\,\mbox{and}\,\,\,|X_{1,A}|= \dfrac{(\widetilde{\chi}^{(n)}(A))!}{(\widetilde{\chi}^{(n)}(A)-k)!},$$ which implies \eqref{leaa}, here $|X|$ is  cardinality  of the set $X.$

 We also have $X_{1,A}\setminus X_{2,A}=\cup_{1\leq j<l\leq k}Y_{j,l,A}$ and $|Y_{j,l,A}|=|Y_{1,2,A}| $ for $ 1\leq j<l\leq k$ by symmetry.  Therefore, we have \begin{align}\label{38}&|X_{1,A}|-| X_{2,A}|\leq\sum_{1\leq j<l\leq k}|Y_{j,l,A}|=k(k-1)|Y_{1,2,A}|/2.
\end{align}

If $a=0,$ then we have $n|\lambda_{j}-\lambda_{l}|\geq n^{}(2c_n)=2c_1 $ for every $1\leq j<l\leq n,$ i.e., $n|\lambda_{j}-\lambda_{l}|\not\in A$, and thus $\widetilde{\chi}^{(n)}(A)=\rho^{(k,n)}(A^k)=0; $ if $k=1$ then $\widetilde{\chi}^{(n)}(A)=\rho^{(1,n)}(A) $ by definitions. In both cases,  the inequalities \eqref{aaad} and \eqref{add} are clearly true, thus we only need to consider the case $a>0,k>1.$

Let $ \lambda_{i,j}:=n|\lambda_{i}-\lambda_{j}|.$ For fixed $\lambda_{i_{1},i_{2}}\in A, $ let \begin{align*}&T_j=\{l:l\neq i_j,n|\lambda_{i_{l}}-\lambda_{i_{j}}|\in A\},\\ &T_j'=\{l:l\neq i_j,|\lambda_{i_{l}}-\lambda_{i_{j}}|\in (0,c_n)\},\ j=1,2.
\end{align*}Then we have $T_j\subseteq T_j'$ because $n|\lambda_{i_{l}}-\lambda_{i_{j}}|\in A $ implies $|\lambda_{i_{l}}-\lambda_{i_{j}}|\in n^{-1}A\subset n^{-1}(0,c_1) =(0,c_n) .$ Let's assume $ \lambda_{i_1}=\lambda_{(p)}$, then we have\begin{align*}\{\lambda_l:l\in T_1'\cup \{i_1\}\}&=\{\lambda_{(q)}:|\lambda_{(q)}-\lambda_{(p)}|<c_n\}\\&=\{\lambda_{(q)}:r\leq q\leq s\},
\end{align*}for some $r,s\in \mathbb{Z}\cap[1,n]$ such that $|\lambda_{(r)}-\lambda_{(p)}|<c_n,\ |\lambda_{(s)}-\lambda_{(p)}|<c_n.$ Therefore, we have $|\lambda_{(r)}-\lambda_{(s)}|<2c_n$ and $s-r\leq a$  by the definition of $a$.  Since $i_1\not\in T_1'$, we have\begin{align*}|T_1'|+1&=|\{\lambda_l:l\in T_1'\cup \{i_1\}\}|=|\{\lambda_{(q)}:r\leq q\leq s\}|\\&\leq s-r+1\leq a+1,
\end{align*}and thus $|T_1|\leq |T_1'|\leq a$. Similarly  we have $|T_2|\leq |T_2'|\leq a.$

Now for $\lambda_{i_{1}, i_{2}}\in A, $ by definition we have $ i_2\in T_1$ and $i_1\in T_2.$ If $\lambda_{i_{3}, i_{4}}\in A,\ i_{3}<i_{4},\ \{i_{1}, i_{2}\}\cap \{i_{3}, i_{4}\}\neq \emptyset,\ \{i_{1}, i_{2}\}\neq \{i_{3}, i_{4}\}$, then we must have $\{i_{3}, i_{4}\}=\{i_{1}, l\},\ l\in T_2\setminus\{i_1\} $ or $\{i_{3}, i_{4}\}=\{i_{2}, l\},\ l\in T_1\setminus\{i_2\}$.  Thus for $\lambda_{i_{1}, i_{2}}\in A, $ the number of $(i_3,i_4)$ satisfying $\lambda_{i_{3}, i_{4}}\in A,\ i_{3}<i_{4}, \ \{i_{1}, i_{2}\}\cap \{i_{3}, i_{4}\}\neq \emptyset,\ \{i_{1}, i_{2}\}\neq \{i_{3}, i_{4}\}$ is at most $|T_2\setminus\{i_1\}|+|T_1\setminus\{i_2\}|=|T_2|-1+|T_1|-1\leq 2(a-1). $ Now there are $\widetilde{\chi}^{(n)}(A) $ choices of $(i_{1},i_{2});$ for fixed $(i_{1},i_{2}) $, there are at most $2(a-1) $ choices of $(i_3,i_4) $ and $\widetilde{\chi}^{(n)}(A) $ choices of $(i_{2l-1},i_{2l})$ with $3\leq l\leq k$ to satisfy $ (i_1,\cdots,i_{2k})\in Y_{1,2,A} ,$ thus we have \begin{align*}|Y_{1,2,A}|\leq \widetilde{\chi}^{(n)}(A)\times2(a-1)\times\widetilde{\chi}^{(n)}(A)^{k-2}=2(a-1)\widetilde{\chi}^{(n)}(A)^{k-1}.
\end{align*}Therefore, by \eqref{38} we have \begin{align*}0&\leq\frac{(\widetilde{\chi}^{(n)}(A))!}{(\widetilde{\chi}^{(n)}(A)-k)!}-\rho^{(k,n)}(A^k)\\&=|X_{1,A}|-| X_{2,A}|\\&\leq k(k-1)|Y_{1,2,A}|/2
\\&\leq k(k-1)(a-1)(\widetilde{\chi}^{(n)}(A))^{k-1},
\end{align*}which is \eqref{aaad}. The inequality \eqref{add} follows from \eqref{aaad} and the fact that\begin{align*}\frac{(\widetilde{\chi}^{(n)}(A))!}{(\widetilde{\chi}^{(n)}(A)-k)!}
=&\prod_{j=0}^{k-1}(\widetilde{\chi}^{(n)}(A)-j)=(\widetilde{\chi}^{(n)}(A))^k\prod_{j=0}^{k-1}(1-j/\widetilde{\chi}^{(n)}(A))\\ \geq& (\widetilde{\chi}^{(n)}(A))^k\left(1-\sum_{j=0}^{k-1}j/\widetilde{\chi}^{(n)}(A)\right)\\=&(\widetilde{\chi}^{(n)}(A))^k-k(k-1) (\widetilde{\chi}^{(n)}(A))^{k-1}/2.
\end{align*}
To prove  \eqref{last}, by the definition of $a$, there exists $r,s\in \mathbb{Z}\cap[1,n]$ such that $|\lambda_{(r)}-\lambda_{(s)}|<2c_n ,\ s-r=a.$ Let $$Z=\{j:\lambda_{j}=\lambda_{(q)},\ r\leq q\leq s\}$$ and\begin{align*}&X_3=\{(i_1,\cdots,i_{2k}):i_j\in {Z},\ \forall\ 1\leq j\leq 2k,\\ &i_{2j-1}< i_{2j},\ \forall\ 1\leq j\leq k,\ \ i_{j}\neq i_{l},\  \forall\ 1\leq j<l\leq 2k\}.
\end{align*}Then we have $ |Z|=s-r+1=a+1,\ X_3\subseteq X_2$ and\begin{align*}&|X_3|=\frac{|Z|!}{(|Z|-2k)!2^k}=\frac{(a+1)!}{(a+1-2k)!2^k}.
\end{align*}Moreover, we have $|\lambda_{j}-\lambda_{l}|\leq|\lambda_{(r)}-\lambda_{(s)}|<2c_n$  for $j\,,l\in Z$.  For $(i_1,\cdots,i_{2k})\in X_3$, we have $ 0<n|\lambda_{i_{2j-1}}-\lambda_{i_{2j}}|<2nc_n=2c_1$ for $1\leq j\leq k$, i.e., $n|\lambda_{i_{2j-1}}-\lambda_{i_{2j}}|\in(0,2c_1)=A_1$. Therefore, we have $X_3\subseteq X_{2,A_1}$ and thus \begin{align*}&\rho^{(k,n)}(A_1^k)=|X_{2,A_1}|\geq|X_3|=\frac{(a+1)!}{(a+1-2k)!2^k},
\end{align*}which is \eqref{last}. This completes the whole proof.\end{proof}
\section{No successive small gaps}\label{nssg}

In this section,  we will prove the following lemma which indicates that there is no successive small gaps. 
 \begin{lem}\label{lem4} For any bounded interval  $ A\subset\mathbb{R}_{+}$,  we have $\chi^{(n)}(A)-\widetilde{\chi}^{(n)}(A)\to0$ in probability  as $n\to+\infty$.\end{lem}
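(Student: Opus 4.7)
Since $\widetilde{\chi}^{(n)}(A)\ge\chi^{(n)}(A)$ pointwise and the difference is a nonnegative integer, convergence in probability to $0$ is equivalent to $P(\widetilde{\chi}^{(n)}(A)>\chi^{(n)}(A))\to 0$. Using the decomposition $\widetilde{\chi}^{(n)}=\sum_{j=1}^{n-1}\widetilde{\chi}^{(n,j)}$ with $\chi^{(n)}=\widetilde{\chi}^{(n,1)}$, this event requires $\widetilde{\chi}^{(n,j)}(A)\ge 1$ for some $j\ge 2$, i.e., the existence of $i$ with $n(\lambda_{(i+j)}-\lambda_{(i)})\in A\subset(0,c_1)$ where $c_1=\sup A$. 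In particular, this forces $\lambda_{(i+2)}-\lambda_{(i)}<c_1/n$. Hence it suffices to establish
\[P\bigl(\exists\,i\in\mathbb{Z}\cap[1,n-2]:\ \lambda_{(i+2)}-\lambda_{(i)}<c_1/n\bigr)\longrightarrow 0.\]

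My plan is a first-moment estimate. By Markov and the exchangeability of the joint density \eqref{6},
\[P\bigl(\exists\,i:\ \lambda_{(i+2)}-\lambda_{(i)}<c_1/n\bigr)\le E\bigl[\#\{i:\lambda_{(i+2)}-\lambda_{(i)}<c_1/n\}\bigr]\le C\binom{n}{3}P_3,\]
where $P_3:=P(|\lambda_i-\lambda_j|<c_1/n\text{ for all }i,j\in\{1,2,3\})$. Writing $P_3$ via \eqref{6} and performing the change of variables $\mu=\lambda_1$, $\epsilon_i=n(\lambda_{i+1}-\lambda_1)$ for $i=1,2$ isolates the three-point cluster: the within-cluster Vandermonde contributes $|\epsilon_1\epsilon_2(\epsilon_1-\epsilon_2)|/n^3$, the Jacobian contributes $n^{-2}$, and the bounded integration over $\epsilon_i\in(-c_1,c_1)$ is $O(1)$, producing a cluster prefactor of order $n^{-5}$.

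The main obstacle is to bound the residual integral
\[\int d\mu\,e^{-3\mu^2/2}\int \prod_{j=4}^{n}|\mu-\lambda_j|^3\,e^{-\lambda_j^2/2}\prod_{4\le i<j\le n}|\lambda_i-\lambda_j|\,d\lambda_4\cdots d\lambda_n\]
by a constant multiple of $G_n$. The interaction factor $|\mu-\lambda_j|^3$ does not directly match the charge-$1$/charge-$2$ structure of the two-component log-gas, so Lemma \ref{lem9} is not immediately applicable. My intended strategy is to factor $|\mu-\lambda_j|^3=|\mu-\lambda_j|\cdot|\mu-\lambda_j|^2$ and apply Cauchy--Schwarz, reducing the estimate to the generalized partition functions $G_{n-3,1}$ and a charge-$1$ modification of $G_{n-1}$; each can then be compared to $G_n$ using Lemma \ref{lem9} together with the explicit formula for $G_n$ from the introduction and standard asymptotics of Hermite moments. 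Combining the cluster prefactor with this residual bound yields $P_3=O(n^{-5})$, and consequently $\binom{n}{3}P_3=O(n^{-2})\to 0$, which gives the claim.
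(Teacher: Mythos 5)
Your reduction to triple clusters is fine and parallels the paper's: in the paper's proof one shows that if $\chi^{(n)}(A)\neq\widetilde\chi^{(n)}(A)$ then $\widetilde\chi^{(n,2)}((0,c_0))>0$ for suitable $c_0$, and a union bound over pairs reduces to estimating the probability that two fixed eigenvalues both land in $(\lambda_i,\lambda_i+c_0/n)$ for some $i$ (Lemma \ref{lem11}). However, the key step of your argument --- the estimate of $P_3$ --- contains a genuine gap, and the stated order of magnitude is in fact wrong.

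First, the residual integral $\int d\mu\, e^{-3\mu^2/2}\int\prod_{j\ge4}|\mu-\lambda_j|^3\,e^{-\lambda_j^2/2}\prod_{4\le i<j}|\lambda_i-\lambda_j|\,d\lambda_4\cdots d\lambda_n$ is \emph{not} $O(G_n)$. It is the partition function of a system with one charge-$3$ particle and $n-3$ charge-$1$ particles, and a local scaling computation (the $3$-point correlation function divided by the Vandermonde behaves like $\rho(\mu)^6$, and $\int\rho^6\,d\mu\sim n^{7/2}$ here since $\rho\sim\sqrt{n}$ on a support of length $\sim\sqrt{n}$) shows this quantity is $\Theta(\sqrt{n}\,G_n)$. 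Consequently $P_3 = \Theta(n^{-9/2})$ rather than $O(n^{-5})$, and $\binom{n}{3}P_3=\Theta(n^{-3/2})$. This still tends to zero, but it is not the bound you assert, and more importantly you do not have a route to prove it. The Cauchy--Schwarz split $|\mu-\lambda_j|^3=|\mu-\lambda_j|\cdot|\mu-\lambda_j|^2$ applied to the integral in $(\lambda_4,\ldots,\lambda_n)$ produces the exponents $2$ and $4$, i.e.\ charge-$2$ and charge-$4$ structures; Lemma \ref{lem9} evaluates only the quotient $G_{n-2k,k}/G_n$ involving charges $1$ and $2$, so neither the charge-$3$ nor the charge-$4$ partition function is within its reach, and ``a charge-$1$ modification of $G_{n-1}$'' is not something the paper defines or controls. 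There is also a smaller but real issue that the substitution $\prod_{i\le 3}|\lambda_i-\lambda_j|\rightsquigarrow|\mu-\lambda_j|^3$ is not a uniform upper bound: when $\lambda_j$ is itself within $O(1/n)$ of $\mu$, the factor $|\lambda_2-\lambda_j|$ with $\lambda_2=\mu+\epsilon_1/n$ can be arbitrarily larger than $|\mu-\lambda_j|$.

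The paper avoids all of this by never creating a charge-$3$ particle. In Lemma \ref{lem11} one integrates the two ``extra'' eigenvalues $(\lambda_{n-1},\lambda_n)$ over the clustered region $B=\cup_i(\lambda_i,\lambda_i+c)^2$ while keeping $\lambda_1,\ldots,\lambda_{n-2}$ as they are. The decisive tool is \eqref{lrime} in Lemma \ref{lem10}: $\int_{B}|x_1-x_2|\,|F(x_1)||F(x_2)|\,dx_1dx_2\le nc^4\int_{\mathbb R}|F(x)|^2\,dx$, proved via the Hermite-expansion derivative estimate \eqref{fdt}. The right-hand side $\int|F|^2$ is exactly the charge-$2$ weight $\prod_j|x-\lambda_j|^2e^{-x^2}$, so the remaining integral is precisely $G_{n-2,1}$, which Lemma \ref{lem9} evaluates as $G_n/4$. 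This yields $\mathbb P(\widetilde\chi^{(n,2)}(A)>0)\le c_0^4/(8n)$ directly, with no need to estimate a charge-$3$ object or to quantify a three-point correlation function. If you want to pursue your route, you would need either to establish an analogue of Lemma \ref{lem9} for a charge-$3$ particle (an asymptotic of the form $G_{n-3,(3)}/G_n=\Theta(\sqrt n)$) and carry the extra $\sqrt n$ through, or to restructure the cluster so that only charges $1$ and $2$ appear, which is essentially what the paper does.
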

To prove Lemma \ref{lem4},  we will need  the upper and lower bounds in the following  integral lemma.  \begin{lem}\label{lem10}Let's assume $\lambda_j\in\mathbb{R}$ (not necessarily distinct) for $1\leq j\leq m$,  $m$ and $n$
are positive integers such that $m< n$,  and $2n c^2\in(0,1)$ with $c>0$.  Let's denote \begin{equation}F(x):=e^{-x^2/2}\prod_{j=1}^m(x-\lambda_j),\end{equation} then we have\begin{equation}\label{fdt} \int_{\mathbb{R}}|F'(x)|^2dx\leq(2n)\int_{\mathbb{R}}|F(x)|^2dx\end{equation}and \begin{equation}\label{slt}\begin{split}(1-n c^2){c^{2}}\int_{\mathbb{R}}d x_1|F(x_1)|^{2}&\leq 
\int_{\mathbb{R}}d x_1\int_{x_1-c}^{x_1+c}d x_2|x_1-x_2||F(x_1)||F(x_2)| \\
& \leq {c^{2}}\int_{\mathbb{R}}d x_1|F(x_1)|^{2}.\end{split}
\end{equation}Moreover, given an interval $A\subset(0,c),$ let's denote $A_1=A\cup(-A)$ and \begin{equation}\varphi(A):=\int_Audu, \end{equation} then we have\begin{equation}\label{sdsss}\begin{split}
(1-n c^2)\cdot2\varphi(A)\int_{\mathbb{R}}d x_1|F(x_1)|^{2} &\leq\int_{\mathbb{R}}d x_1\int_{x_1+A_1}d x_2|x_1-x_2||F(x_1)||F(x_2)|\\
&\leq  2\varphi(A)\int_{\mathbb{R}}d x_1|F(x_1)|^{2}.\end{split}
\end{equation} Given $B=\cup_{i=1}^m(\lambda_i,\lambda_i+c)^2\subset\mathbb{R}^2$, we have\begin{equation}\label{lrime}
\int_{B}|x_1-x_2||F(x_1)||F(x_2)|d x_1d x_2\leq nc^{4}\int_{\mathbb{R}}|F(x)|^2dx.
\end{equation}\end{lem}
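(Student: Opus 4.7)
The starting observation is that $F(x)=e^{-x^2/2}\prod_j(x-\lambda_j)$ lies in $V_m\subset V_n$ by \eqref{44} (since $m<n$), so one has an orthonormal expansion $F=\sum_{j=0}^m a_j\varphi_j$ with $\int|F|^2\,dx=\sum_j a_j^2$ by \eqref{45}. To prove \eqref{fdt}, I would differentiate term by term and invoke the three-term recurrence \eqref{41} to get $\sqrt{2}F'=\sum_j a_j(\sqrt{j}\varphi_{j-1}-\sqrt{j+1}\varphi_{j+1})$. Collecting the coefficient of $\varphi_k$ and applying Parseval yields $2\int|F'|^2=\sum_k(\sqrt{k+1}\,a_{k+1}-\sqrt{k}\,a_{k-1})^2$, and the elementary estimate $(a-b)^2\leq 2a^2+2b^2$ then bounds this by $2\sum_j(2j+1)a_j^2\leq 2(2m+1)\int|F|^2$. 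Since $m<n$, this gives \eqref{fdt}.

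For both \eqref{slt} and \eqref{sdsss}, the unifying identity is
\[
|F(x_1)|\,|F(x_2)|=\tfrac12\bigl(|F(x_1)|^2+|F(x_2)|^2\bigr)-\tfrac12\bigl(|F(x_1)|-|F(x_2)|\bigr)^2.
\]
The main term integrates exactly by the substitution $h=x_2-x_1$ together with $\int_{-c}^c|h|\,dh=c^2$ and $\int_{A_1}|h|\,dh=2\varphi(A)$, yielding the upper bounds immediately. For the lower bounds, I would bound the nonnegative error using $\bigl||F(x_1)|-|F(x_2)|\bigr|^2\leq|F(x_1)-F(x_2)|^2\leq|x_1-x_2|\int_{\min(x_1,x_2)}^{\max(x_1,x_2)}|F'|^2\,dt$ (Cauchy-Schwarz). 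Swapping the order of integration reduces the error to $\bigl(\int|h|^3\,dh\bigr)\int|F'|^2$ on the appropriate domain, which evaluates to $\frac{c^4}{2}\int|F'|^2$ for \eqref{slt} and is at most $2c^2\varphi(A)\int|F'|^2$ for \eqref{sdsss} (using $|h|^2\leq c^2$ on $A_1\subset(-c,c)$). Applying \eqref{fdt} absorbs $\int|F'|^2$ into $2n\int|F|^2$ and produces relative errors of the desired size $nc^2$, yielding both lower bounds.

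For \eqref{lrime}, the crucial new input is that $F(\lambda_i)=0$. On each square $B_i:=(\lambda_i,\lambda_i+c)^2$, Cauchy-Schwarz applied to $F(x)=\int_{\lambda_i}^x F'(t)\,dt$ gives the Hardy-type estimate $|F(x)|^2\leq(x-\lambda_i)\int_{\lambda_i}^x|F'|^2\,dt$, whence $\int_{\lambda_i}^{\lambda_i+c}|F|^2\leq\tfrac{c^2}{2}\int_{\lambda_i}^{\lambda_i+c}|F'|^2$. Combining this with $|x_1-x_2|\leq c$ and $|F(x_1)F(x_2)|\leq\tfrac12(|F(x_1)|^2+|F(x_2)|^2)$ on $B_i$ gives $\iint_{B_i}|x_1-x_2||F(x_1)||F(x_2)|\leq\tfrac{c^4}{2}\int_{\lambda_i}^{\lambda_i+c}|F'|^2$; summing over $i$ and appealing to \eqref{fdt} delivers the target bound $nc^4\int|F|^2$. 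The principal technical point, and what I expect to be the main obstacle, is the last step: since the $\lambda_i$ are not assumed distinct, the intervals $(\lambda_i,\lambda_i+c)$ may overlap, and the clean estimate $\sum_i\int_{\lambda_i}^{\lambda_i+c}|F'|^2\leq\int_{\mathbb{R}}|F'|^2$ is immediate only when these intervals are disjoint. In the general case one must reorganize the sum so that each $t\in\mathbb{R}$ is counted only once (for instance, by replacing $\sum_i\int_{\lambda_i}^{\lambda_i+c}|F'|^2$ with the integral over the union $\cup_i(\lambda_i,\lambda_i+c)$, or equivalently by invoking the Hardy bound with the root of $F$ closest below $x$) so as not to lose a factor of $m$ in the final estimate.
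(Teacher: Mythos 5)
Your proof follows essentially the same route as the paper for \eqref{fdt} (orthonormal expansion in $V_m$, the three-term recurrence \eqref{41}, and Parseval). For \eqref{slt} and \eqref{sdsss} your organization is slightly different: the paper first proves the uniform-in-$t$ bound $\int|F(x_1)||F(x_1+t)|\,dx_1\geq(1-n|t|^2)\int|F|^2$ and then integrates $|t|$ over $(-c,c)$ or $A_1$, whereas you subtract the error term $\frac12\iint|x_1-x_2|\bigl(|F(x_1)|-|F(x_2)|\bigr)^2$ directly and use Fubini to reduce it to $\frac12\bigl(\int|h|^3\,dh\bigr)\int|F'|^2$; both are valid and exploit the same Cauchy--Schwarz control of $F(x+h)-F(x)$ by $F'$. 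One bookkeeping slip: in stating the error as $\bigl(\int|h|^3\,dh\bigr)\int|F'|^2$ you dropped the $\tfrac12$ that comes from the pointwise identity. This is harmless for \eqref{slt} (you get $(1-\tfrac{nc^2}{2})c^2\geq(1-nc^2)c^2$ even without it), but for \eqref{sdsss} the version without the $\tfrac12$ only yields $(1-2nc^2)\cdot 2\varphi(A)$, which is weaker than the claimed $(1-nc^2)\cdot 2\varphi(A)$; restoring the $\tfrac12$ gives precisely the stated constant. For \eqref{lrime} you correctly anticipate the central subtlety --- the intervals $(\lambda_i,\lambda_i+c)$ may overlap, so the na\"ive sum $\sum_i\int_{\lambda_i}^{\lambda_i+c}|F'|^2$ can be as large as $m\int|F'|^2$ --- and your proposed fix (apply Hardy with the root of $F$ nearest below $x$, equivalently integrate over the union $B_1=\cup_i(\lambda_i,\lambda_i+c)$) is exactly what the paper does: after sorting the $\lambda_i$, it partitions $B_1$ into disjoint pieces $I_j=(\lambda_j,\lambda_j+c)\cap(\lambda_j,\lambda_{j+1}]$ so that each $I_j$ has its own root of $F$ at the left endpoint, applies the Hardy estimate on each $I_j$, and the disjointness then gives $\sum_j\int_{I_j}|F'|^2\leq\int_{\mathbb R}|F'|^2$ with no loss. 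So your proposal is correct in substance; just carry the $\tfrac12$ carefully in \eqref{sdsss} and make the disjoint partition explicit in \eqref{lrime}.
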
\begin{proof}Note that $F(x)\in V_m$ (see \eqref{44}), therefore, we can write\begin{align*}&
F(x)=\sum_{j=0}^m a_j\varphi_j(x).
\end{align*}By \eqref{41} we have\begin{align*} F'(x)&=\sum_{j=0}^m\frac{a_j}{\sqrt{2}}(\sqrt{j}\varphi_{j-1}(x)-\sqrt{j+1}\varphi_{j+1}(x))\\
&=\sum_{j=0}^{m+1}\frac{\sqrt{j+1}a_{j+1}-\sqrt{j}a_{j-1}}{\sqrt{2}}\varphi_{j}(x),
\end{align*}where $\varphi_{-1}(x)=0,\ a_{-1}=a_{m+1}=a_{m+2}=0. $ By \eqref{45} we have $$ \int_{\mathbb{R}}|F(x)|^2dx=\sum_{j=0}^m|a_j|^2$$ and  $$\int_{\mathbb{R}}|F'(x)|^2dx=\sum_{j=0}^{m+1}\left|\frac{\sqrt{j+1}a_{j+1}-\sqrt{j}a_{j-1}}{\sqrt{2}}\right|^2.$$Using $|a+b|^2\leq 2(|a|^2+|b|^2)$ and $a_{-1}=a_{m+1}=a_{m+2}=0 $, we have \begin{align*} \int_{\mathbb{R}}|F'(x)|^2dx&\leq\sum_{j=0}^{m+1}(|\sqrt{j+1}a_{j+1}|^2+|\sqrt{j}a_{j-1}|^2)\\
&=\sum_{j=1}^{m+2}j|a_{j}|^2+\sum_{j=-1}^{m}(j+1)|a_{j}|^2=\sum_{j=0}^{m}(2j+1)|a_{j}|^2\\ &\leq \sum_{j=0}^{m}(2m+1)|a_{j}|^2=(2m+1)\int_{\mathbb{R}}|F(x)|^2dx\\&\leq(2n)\int_{\mathbb{R}}|F(x)|^2dx,
\end{align*} which is the first inequality \eqref{fdt}. Here we used the fact that $m<n,\ n\geq m+1.$

To prove \eqref{slt},
 a change of variables $x_2=x_1+t$ yields \begin{align}\label{37}&\int_{\mathbb{R}}d x_1\int_{x_1-c}^{x_1+c}d x_2|x_1-x_2||F(x_1)||F(x_2)| \\ \nonumber
=&\int_{-c}^c|t|dt\int_{\mathbb{R}}|F(x_1)||F(x_1+t)|d x_1.
\end{align}We also have \begin{equation}\label{llldd}\begin{split} & \int_{\mathbb{R}}\left||F(x_1)|-|F(x_1+t)|\right|^2d x_1\\=&\int_{\mathbb{R}}(|F(x_1)|^{2}+|F(x_1+t)|^{2})d x_1-2\int_{\mathbb{R}}|F(x_1)||F(x_1+t)|d x_1\\=&2\int_{\mathbb{R}}|F(x_1)|^{2}d x_1-2\int_{\mathbb{R}}|F(x_1)||F(x_1+t)|d x_1,\end{split}
\end{equation}which implies\begin{align}\label{36}&\int_{\mathbb{R}}|F(x_1)||F(x_1+t)|dx_1 \leq\int_{\mathbb{R}}|F(x_1)|^{2}d x_1.
\end{align}
By \eqref{37} and \eqref{36}, we have\begin{align*}&\int_{\mathbb{R}}d x_1\int_{x_1-c}^{x_1+c}d x_2|x_1-x_2||F(x_1)||F(x_2)|\\
\leq&\int_{-c}^c|t|dt\int_{\mathbb{R}}|F(x_1)|^{2}d x_1=c^2\int_{\mathbb{R}}|F(x_1)|^{2}d x_1,
\end{align*}which is the upper bound in \eqref{slt}.

On the other hand, we have\begin{align*}&\left||F(x_1)|-|F(x_1+t)|\right|^2\leq \left|F(x_1)-F(x_1+t)\right|^2\\=&\left|-t\int_0^1F'(x_1+ts)ds\right|^2\leq |t|^2\int_0^1|F'(x_1+ts)|^2ds,
\end{align*} and thus by \eqref{fdt} we have \begin{align*}&\int_{\mathbb{R}}\left||F(x_1)|-|F(x_1+t)|\right|^2d x_1\\ \leq &|t|^2\int_{\mathbb{R}}\int_0^1|F'(x_1+ts)|^2dsd x_1\\=&|t|^2\int_0^1[\int_{\mathbb{R}}|F'(x_1+ts)|^2d x_1]ds=|t|^2\int_0^1\int_{\mathbb{R}}|F'(x_1)|^2d x_1ds\\ =&|t|^2\int_{\mathbb{R}}|F'(x_1)|^2d x_1\leq 2n|t|^2\int_{\mathbb{R}}|F(x_1)|^2d x_1.\end{align*}  Combining this estimate with identity \eqref{llldd}, we have $$\int_{\mathbb{R}}|F(x_1)||F(x_1+t)|dx_1 \geq(1-n|t|^2)\int_{\mathbb{R}}|F(x_1)|^{2}d x_1,\ \forall\ t\in(-c,c),$$  and thus the uniform lower bound\begin{equation}\label{34}\int_{\mathbb{R}}|F(x_1)||F(x_1+t)|dx_1 \geq(1-nc^2)\int_{\mathbb{R}}|F(x_1)|^{2}d x_1,\ \forall\ t\in(-c,c).
\end{equation} Therefore, combining \eqref{37} and \eqref{34}, we can conclude the lower bound  in \eqref{slt}.

Notice that\begin{align*}&
\int_{\mathbb{R}}d x_1\int_{x_1+A_1}d x_2|x_1-x_2||F(x_1)||F(x_2)|\\
=&\int_{A_1}|t|dt\int_{\mathbb{R}}|F(x_1)||F(x_1+t)|d x_1,
\end{align*}then \eqref{sdsss} follows from \eqref{36}, \eqref{34} and the fact that\begin{align*}&
\int_{A_1}|t|dt=2\int_{A}tdt=2\varphi(A).
\end{align*}Let $B_1=\cup_{i=1}^m(\lambda_i,\lambda_i+c)\subset\mathbb{R}$, then for $(x_1,x_2)\in B=\cup_{i=1}^m(\lambda_i,\lambda_i+c)^2$, we have $x_1,x_2\in B_1,\ (x_2,x_1)\in B,\ |x_1-x_2|\leq c,$ and thus we first have \begin{align*}&
\int_{B}|x_1-x_2||F(x_1)||F(x_2)|d x_1d x_2\leq \frac{1}{2}\int_{B}|x_1-x_2|(|F(x_1)|^2+|F(x_2)|^2)d x_1d x_2\\=&\int_{B}|x_1-x_2||F(x_1)|^2d x_1d x_2\leq\int_{B_1}dx_1\int_{x_1-c}^{x_1+c}dx_2|x_1-x_2||F(x_1)|^2\\=&c^2\int_{B_1}|F(x_1)|^2dx_1.
\end{align*}Without loss of generality we can assume that $ \lambda_1\leq \cdots\leq\lambda_m$ and let's denote $I_j=(\lambda_j,\lambda_j+c)\cap(\lambda_j,\lambda_{j+1}]$ for $1\leq j<m,$ $I_m=(\lambda_m,\lambda_m+c)$, then we have $B_1=\cup_{j=1}^mI_j$ and $ I_j\cap I_k=\emptyset$ for $j\neq k.$ By definition we have $F(\lambda_j)=0$ and\begin{align*}&
|F(z)|^2=\left|\int_{\lambda_j}^zF'(x)dx\right|^2\leq |z-\lambda_j|\int_{\lambda_j}^z|F'(x)|^2dx\leq |z-\lambda_j|\int_{I_j}|F'(x)|^2dx
\end{align*}for $z\in I_j\subseteq (\lambda_j,\lambda_j+c).$ Thus we have \begin{align*}
\int_{I_j}|F(z)|^2dz\leq &\int_{I_j}|z-\lambda_j|dz\int_{I_j}|F'(x)|^2dx\\ \leq& \int_{(\lambda_j,\lambda_j+c)}|z-\lambda_j|dz\int_{I_j}|F'(x)|^2dx\\= &(c^2/2)\int_{I_j}|F'(x)|^2dx.
\end{align*}Combining this with \eqref{fdt}, we further have \begin{align*}&
\int_{B}|x_1-x_2||F(x_1)||F(x_2)|d x_1d x_2\\ \leq & c^2\int_{B_1}|F(x_1)|^2dx_1=c^2\sum_{j=1}^m\int_{I_j}|F(x_1)|^2dx_1\\ \leq & c^2\sum_{j=1}^m(c^2/2)\int_{I_j}|F'(x)|^2dx=(c^4/2)\int_{B_1}|F'(x)|^2dx\\ \leq&(c^4/2)\int_{\mathbb{R}}|F'(x)|^2dx
\leq (c^4/2)(2n)\int_{\mathbb{R}}|F(x)|^2dx=nc^4\int_{\mathbb{R}}|F(x)|^2dx,
\end{align*}
which is \eqref{lrime}. This completes the proof.\end{proof}\subsection{No successive small gaps}
Now we can prove Lemma \ref{lem4}. We first need the following lemma which gives more precise meaning that there is no successive small gaps.\begin{lem}\label{lem11} For $A=(0,c_0)$ and $n>2c_0^2+2$, we have \begin{align*}&\mathbb{P}(\widetilde{\chi}^{(n,2)}(A)>0)\leq c^4_0/(8n).
\end{align*}\end{lem}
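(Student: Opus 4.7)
Here is the plan. First, by Markov's inequality,
\begin{equation*}
\mathbb{P}(\widetilde{\chi}^{(n,2)}(A)>0)\leq\mathbb{E}\widetilde{\chi}^{(n,2)}(A).
\end{equation*}
Since $\widetilde{\chi}^{(n,2)}(A)$ counts the sorted‐consecutive triples $(\lambda_{(i)},\lambda_{(i+1)},\lambda_{(i+2)})$ with spread less than $c_0/n$, and since each such triple corresponds to an unordered triple of indices, dropping the consecutiveness requirement and using the exchangeability of the joint density \eqref{6} gives
\begin{equation*}
\mathbb{E}\widetilde{\chi}^{(n,2)}(A)\leq\binom{n}{3}\mathbb{P}\bigl(|\lambda_1-\lambda_2|,\,|\lambda_1-\lambda_3|,\,|\lambda_2-\lambda_3|<c_0/n\bigr).
\end{equation*}

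Writing the probability on the right as $\frac{1}{G_n}\int_D|J_n|\,dx$, I factor $|J_n|=P(x_4,\ldots,x_n)\,F(x_1)F(x_2)F(x_3)\,|x_1-x_2||x_1-x_3||x_2-x_3|$, where $F(x)=e^{-x^2/2}\prod_{j\geq 4}|x-x_j|$ and $P(x_4,\ldots,x_n)=e^{-\sum_{j\geq 4}x_j^2/2}\prod_{4\leq i<j}|x_i-x_j|$. Setting $c=c_0/n$, the key trick is to absorb two Vandermonde factors into a modified one-variable function: define $\tilde F(x)=|x-x_1|F(x)$. Then the three‐body integrand becomes $F(x_1)\,\tilde F(x_2)\tilde F(x_3)\,|x_2-x_3|$, and I can apply \eqref{slt} from Lemma \ref{lem10} to the pair $(x_2,x_3)$, which is legitimate since $\tilde F$ is a polynomial of degree $n-2<n$ times a Gaussian and $2nc^2<1$ by the hypothesis $n>2c_0^2+2$.

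On the support of the inner integral, we additionally have $|x_2-x_1|,|x_3-x_1|<c$, so $\tilde F(x_j)\leq c\,F(x_j)$ pointwise. Combining this pointwise bound with \eqref{slt} supplies an extra factor $c^2$, so that the inner integral over $(x_2,x_3)$ is at most $c^4\|F\|_2^2$ in the appropriate localized sense. The remaining integration against $F(x_1)$ and against $P$ over the spectators reconstructs exactly the integrand of $G_{n-2,1}$: $x_1$ plays the role of an additional charge-$1$ particle and the remaining variable in $\|\tilde F(x_1)\|_2^2=\int(x-x_1)^2 F(x)^2\,dx$ plays the role of the unique charge-$2$ particle, where the factor $(x-x_1)^2$ is precisely the charge-$1$/charge-$2$ interaction. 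Applying Lemma \ref{lem9} with $k=1$ yields $G_{n-2,1}=G_n/4$, so the probability is bounded by a constant times $c^4$. Multiplying by $\binom{n}{3}$ and tracking the explicit constants produces the bound $c_0^4/(8n)$.

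The main obstacle is Step 4: one must combine the pointwise estimate $\tilde F\leq cF$ with Lemma \ref{lem10} in such a way that the resulting expression still matches the integrand of $G_{n-2,1}$ (which requires the $(y-z)^2$ interaction factor), while also producing the full $c^4$ scaling needed. Using \eqref{slt} on $\tilde F$ directly preserves the $G_{n-2,1}$ structure but gives only a $c^2$ bound; conversely, bounding $\tilde F$ pointwise by $cF$ produces the desired $c^4$ but would naively yield $\|F\|_1\|F\|_2^2$ in place of $G_{n-2,1}$. Threading both requires carefully exploiting the three constraints $|x_i-x_j|<c$ simultaneously and applying the sharp form of \eqref{slt} together with a localization argument that respects the vanishing of $\tilde F$ at $x_1$, so that the final constant matches the stated $1/8=\tfrac{1}{2}\cdot\tfrac{1}{4}$ (the $\tfrac{1}{4}$ coming from Lemma \ref{lem9} and the remaining factor from the combinatorial and symmetry considerations).
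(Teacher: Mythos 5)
Your approach diverges from the paper's at the very first step: you use Markov's inequality and count triples with all pairwise distances below $c_0/n$, whereas the paper uses a union bound over the $\binom{n}{2}$ \emph{pairs} $\{j,k\}$ and bundles the remaining choice of $i$ into a single integral over the union of boxes $B=\cup_{i=1}^{n-2}(\lambda_i,\lambda_i+c)^2$, which is exactly what \eqref{lrime} is tailored to. This is not a cosmetic difference — it is where the extra factor of $n$ gets paid for, and it is precisely where your sketch has a gap.

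To see why, note that for your bound to close you need $\mathbb{P}(|\lambda_1-\lambda_2|,|\lambda_1-\lambda_3|,|\lambda_2-\lambda_3|<c)\lesssim c^4$ with \emph{no extra factor of $n$}, since $\binom{n}{3}\sim n^3$ and $c=c_0/n$. Your ``absorb a Vandermonde into $\tilde F=|x-x_1|F$'' trick is the right parallel to \eqref{lrime}, and the constraint $|x_2-x_1|,|x_3-x_1|<c$ together with $\tilde F(x_1)=0$ gives, by the same fundamental-theorem-of-calculus step as in the proof of \eqref{lrime},
\begin{align*}
\int_{(x_1-c,x_1+c)^2}|x_2-x_3|\,|\tilde F(x_2)||\tilde F(x_3)|\,dx_2\,dx_3\;\leq\; c^4\int_{(x_1-c,x_1+c)}|\tilde F'|^2.
\end{align*}
So far so good: this is the $c^4$ scaling and it still lives in the $\tilde F$-world, i.e.\ keeps the $(y-x_1)^2$ factor needed to reconstruct $G_{n-2,1}$. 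The problem is what you do next. The only tool in Lemma~\ref{lem10} for controlling $\int|\tilde F'|^2$ is \eqref{fdt}, i.e.\ $\|\tilde F'\|_2^2\leq 2n\|\tilde F\|_2^2$, and applying it to this single small interval yields $\leq 2n c^4\|\tilde F\|_2^2$, so your per-triple bound is $\lesssim n c^4\,G_{n-2,1}/G_n$. Multiplying by $\binom{n}{3}\sim n^3$ and using $c=c_0/n$ gives a \emph{constant}, not $c_0^4/(8n)$. The two alternatives you name in your ``main obstacle'' paragraph are genuinely incompatible: the pointwise bound $\tilde F\leq cF$ costs you the $(y-x_1)^2$ interaction (you end up with $\|F\|_1\|F\|_2^2$, which is not $G_{n-2,1}$ and is not controlled by the tools in the paper), while \eqref{slt} on $\tilde F$ preserves the structure but only gives $c^2$, and the localization via $\tilde F(x_1)=0$ recovers $c^4$ but costs a factor $n$ through \eqref{fdt}.

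The paper avoids this trap by not treating $i$ as part of the combinatorial count. In the proof of \eqref{lrime} the key move is to write $\int_{B_1}|F|^2\leq \sum_j \tfrac{c^2}{2}\int_{I_j}|F'|^2=\tfrac{c^2}{2}\int_{B_1}|F'|^2\leq \tfrac{c^2}{2}\|F'\|_2^2$, i.e.\ to sum the local $\int_{I_j}|F'|^2$ over \emph{all} $n-2$ boxes first and then apply $\|F'\|_2^2\leq 2n\|F\|_2^2$ exactly once. That produces the bound $nc^4\|F\|_2^2$ for the whole union, rather than $nc^4$ \emph{per box}, so the $n$ is paid once and is then matched by only a $\binom{n}{2}$ external union bound. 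Your triple-wise decomposition has a single box per term, so there is nothing to sum over before invoking \eqref{fdt}, and the amortization is lost. To repair your route you would need an estimate of the form $\int P(\lambda_{4:n})\,F(x_1)\,\int_{(x_1-c,x_1+c)}|\tilde F'|^2\,dx_1\,d\lambda_{4:n}\lesssim G_n$, which is plausible (heuristically the left side is $\sim G_n/n$, consistent with the true order of $\mathbb{P}(\text{3-body})$), but it is not a consequence of Lemmas~\ref{lem9} or \ref{lem10} and would require a genuinely new argument. As written, your sketch does not close.

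Finally, two smaller remarks: the explicit constant $1/8$ in the paper comes from $\binom{n}{2}\leq n^2/2$ and $G_{n-2,1}/G_n=1/4$; your $\binom{n}{3}$ route would need to track a different combinatorial factor. And in the paper's version the inner pair $(x_1,x_2)$ is integrated over a union of boxes attached to actual \emph{roots} of $F$, so $F$ itself vanishes at the corner — no auxiliary $\tilde F$ is needed, and the whole mechanism is exactly \eqref{lrime} applied verbatim.
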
\begin{proof}If $\widetilde{\chi}^{(n,2)}(A)>0 $, then there exist distinct $i,j,k$ such that $ \lambda_j,\lambda_k\in(\lambda_i,\lambda_{i}+c_0/n).$ Let's denote \begin{align*}\Lambda_{j,k,c}:=\{&(\lambda_1,\cdots, \lambda_n):\exists\ i\in\mathbb{Z}\cap[1,n]\ s.t.\ \lambda_j,\lambda_k\in(\lambda_i,\lambda_{i}+c)\},
\end{align*}then we have\begin{align*}&\mathbb{P}(\widetilde{\chi}^{(n,2)}(A)>0)\\ \leq &\sum_{1\leq j<k\leq n}\mathbb{P}((\lambda_1,\cdots, \lambda_n)\in\Lambda_{j,k,c_0/n})\\=&\mathbb{P}((\lambda_1,\cdots, \lambda_n)\in\Lambda_{n-1,n,c_0/n})n(n-1)/2.
\end{align*}For fixed $(\lambda_1,\cdots, \lambda_{n-2})\in\mathbb{R}^{n-2},\ c>0, $ as in Lemma \ref{lem10}, let's denote\begin{align*}&B(\lambda_1,\cdots, \lambda_{n-2},c):=\cup_{i=1}^{n-2}(\lambda_i,\lambda_i+c)^2\subset\mathbb{R}^2,
\end{align*}then $(\lambda_1,\cdots, \lambda_n)\in\Lambda_{n-1,n,c} $ is equivalent to $(\lambda_{n-1}, \lambda_n)\in B(\lambda_1,\cdots, \lambda_{n-2},c). $

With $c=c_0/n>0,$ we have $2nc^2=2c_0^2/n\in(0,1)$ by assumption,  then by \eqref{lrime}, we have \begin{align*}&
\int_{B(\lambda_1,\cdots, \lambda_{n-2},c_0/n)}|x_1-x_2||F(x_1)||F(x_2)|d x_1d x_2\leq n(c_0/n)^{4}\int_{\mathbb{R}}|F(x)|^2dx,
\end{align*}where $$F(x)=e^{-x^2/2}\prod_{j=1}^{n-2}(x-\lambda_j).$$ Hence,  we have \begin{align*}&\mathbb{P}((\lambda_1,\cdots, \lambda_n)\in\Lambda_{n-1,n,c_0/n})\\=&\frac{1}{G_n}\int_{\Lambda_{n-1,n,c_0/n}}e^{-\sum\limits_{i=1}^n\lambda_i^2/2}\prod_{1\leq i<j\leq n}|\lambda_i-\lambda_j| d\lambda_1\cdots d\lambda_n\\=&\frac{1}{G_n}\int_{\mathbb{R}^{n-2}}d\lambda_1\cdots d\lambda_{n-2}e^{-\sum\limits_{i=1}^{n-2}\lambda_i^2/2}
\prod_{1\leq j<k\leq n-2}|\lambda_j-\lambda_k|\\&\times\int_{B(\lambda_1,\cdots, \lambda_{n-2},c_0/n)}|x_1-x_2|e^{-x_1^2/2-x_2^2/2}\prod_{i=1}^{2}\prod_{j=1}^{n-2}|x_i-\lambda_j|d x_1d x_2\\=&\frac{1}{G_n}\int_{\mathbb{R}^{n-2}}d\lambda_1\cdots d\lambda_{n-2}e^{-\sum\limits_{i=1}^{n-2}\lambda_i^2/2}
\prod_{1\leq j<k\leq n-2}|\lambda_j-\lambda_k|\\&\times \int_{B(\lambda_1,\cdots, \lambda_{n-2},c_0/n)}|x_1-x_2||F(x_1)||F(x_2)|d x_1d x_2 \\ \leq&\frac{n(c_0/n)^{4}}{G_n}\int_{\mathbb{R}^{n-2}}d\lambda_1\cdots d\lambda_{n-2}e^{-\sum\limits_{i=1}^{n-2}\lambda_i^2/2}
\prod_{1\leq j<k\leq n-2}|\lambda_j-\lambda_k|  \\&\times\int_{\mathbb{R}}e^{-x^2}\prod_{j=1}^{n-2}|x-\lambda_j|^2d x\\=&\frac{n(c_0/n)^{4}}{G_n}G_{n-2,1}=\frac{n(c_0/n)^{4}}{4},
\end{align*}where  we used Lemma \ref{lem9} with $k=1$ in the last step. Therefore, we have
\begin{align*}\mathbb{P}(\widetilde{\chi}^{(n,2)}(A)>0)\leq& \mathbb{P}((\lambda_1,\cdots, \lambda_n)\in\Lambda_{n-1,n,c_0/n})n(n-1)/2\\ \leq&\frac{n(c_0/n)^{4}}{4}n^2/2=\frac{c^4_0}{8n}.
\end{align*}This completes the proof. \end{proof}Now we can give the proof of Lemma \ref{lem4} using Lemma \ref{lem11}.\begin{proof}Let $c_0$ be such that $A\subset(0,c_0)$ and $A_1=(0,c_0).$ Then $\chi^{(n)}(A)-\widetilde{\chi}^{(n)}(A)\neq 0$ implies $\widetilde{\chi}^{(n,j)}(A)>0 $ for some $j>1$ and thus we must have  $\widetilde{\chi}^{(n,2)}(A_1)\geq\widetilde{\chi}^{(n,j)}(A_1)\geq \widetilde{\chi}^{(n,j)}(A)>0$. For  $ n>2c_0^2+2,$ by Lemma \ref{lem11} we deduce that\begin{align*}&\mathbb{P}(\chi^{(n)}(A)-\widetilde{\chi}^{(n)}(A)\neq 0)\leq \mathbb{P}(\widetilde{\chi}^{(n,2)}(A_1)>0)\leq c^4_0/(8n)\to 0,
\end{align*}which completes the proof.\end{proof}

\section{Integral inequalities of two-component log-gas}\label{twotwo}
In this section, we will prove several useful inequalities regarding the two-component log-gas, which is one of the crucial steps
in proving the convergence of the factorial moments of $\widetilde \chi^{(n)}$ (see Lemma \ref{lem8}).

Let  $A=(0,c_0),\ n>2k,$ by the definition of $\rho^{(k,n)}$, we have\begin{equation}\label{times}\begin{split}\mathbb{E}\rho^{(k,n)}(A^k)&=\frac{n!}{(n-2k)!2^kG_n}\int_{\Sigma_{n,k,c_0/n}}|J_n(\lambda_1,\cdots, \lambda_n)| d\lambda_1\cdots d\lambda_n,\end{split}
\end{equation}where $J_n$ is defined in \eqref{46} and \begin{align*}\numberthis \label{sigma}\begin{split}\Sigma_{n,k,c}=\{&(\lambda_1,\cdots, \lambda_n)\in\mathbb{R}^n: |\lambda_j-\lambda_{j-k}|<c,\forall\ n-k< j\leq n\},\end{split}\end{align*}
i.e.,  $\Sigma_{n,k,c}$  is the set $(\lambda_1,\cdots, \lambda_n)\in\mathbb{R}^n$ with $k$ pairs $(\lambda_j, \lambda_{j-k})$ such that  $|\lambda_j-\lambda_{j-k}|<c$.

We will first prove the  inequality \eqref{21} below regarding the two-component log-gas. The significance of such type inequality is that it will imply the bounds between the integration of the joint density over the set  $\Sigma_{n,k,c_0/n}$, i.e., $\mathbb{E}\rho^{(k,n)}(A^k)$ and the partition function $G_{n-2k, k}$ of the two-component log-gas which consists of $n-2k$ particles with charge $q=1$ and $k$ particles with charge $q=2$
 (see Lemma \ref{lem12}).


For $0\leq l\leq k$,  let's denote the following integral of the two-component log-gas\begin{align*}&{E_{n,k,l}}(c):=\int_{\Sigma_{n-l,k-l,c}}d\lambda_1\cdots d\lambda_{n-l}e^{-\sum\limits_{i=1}^{n-l}q_i\lambda_i^2/2}
\prod_{j<m}|\lambda_j-\lambda_m|^{q_jq_m}\Big|_{q_s=1+\chi_{\{s\leq l\}}},
\end{align*}where $\Sigma_{n-l,k-l,c}$ is    defined via \eqref{sigma}.
By definition of $G_{n_1,n_2}$ (recall \eqref{gaa}), we first have $${E_{n,k,k}}(c)=G_{n-2k,k}.
$$
We also have\begin{align}\label{exd}{{E_{n,k,0}}(c)}{ }=\int_{\Sigma_{n,k,c}}|J_n(\lambda_1,\cdots, \lambda_n)| d\lambda_1\cdots d\lambda_n,
\end{align}which implies 
\begin{align}\label{exde}\mathbb{E}\rho^{(k,n)}(A^k)&=\frac{n!}{(n-2k)!2^kG_n}{{E_{n,k,0}}(c_0/n)}{ }.
\end{align}
 We will  show that (for $0< 2n c^2<1$)\begin{equation}\label{21}(1-n c^2){c^{2}}\leq\frac{{E_{n,k,l-1}}(c)}{{E_{n,k,l}}(c)}\leq{c^{2}}.
\end{equation}In fact, after changing the order of variables, we can rewrite\begin{align*}&{E_{n,k,l-1}}(c)=\int_{\Sigma_{n-l-1,k-l,c}}d\lambda_1\cdots d\lambda_{n-l-1}e^{-\sum\limits_{i=1}^{n-l-1}q_i\lambda_i^2/2}
\prod_{1\leq j<m\leq n-l-1}|\lambda_j-\lambda_m|^{q_jq_m}\\ &\times\int_{\mathbb{R}}d x_1\int_{x_1-c}^{x_1+c}d x_2|x_1-x_2|e^{-x_1^2/2-x_2^2/2}\prod_{j=1}^2\prod_{m=1}^{n-l-1}|x_j-\lambda_m|^{q_m}\Big|_{q_s=1+\chi_{\{s\leq l-1\}}},
\end{align*}and \begin{align*}&{E_{n,k,l}}(c)=\int_{\Sigma_{n-l-1,k-l,c}}d\lambda_1\cdots d\lambda_{n-l-1}e^{-\sum\limits_{i=1}^{n-l-1}q_i\lambda_i^2/2}
\prod_{1\leq j<m\leq n-l-1}|\lambda_j-\lambda_m|^{q_jq_m}\\ &\times\int_{\mathbb{R}}d x_1e^{-x_1^2}\prod_{m=1}^{n-l-1}|x_1-\lambda_m|^{2q_m}\Big|_{q_s=1+\chi_{\{s\leq l-1\}}}.
\end{align*}Then \eqref{21} follows from \eqref{slt}  by taking  \begin{equation}\label{fsx}F(x)=e^{-x^2/2}\prod_{j=1}^{n-l-1}|x-\lambda_m|^{q_m}.\end{equation} By \eqref{21} we will have\begin{align}\label{22}&{{E_{n,k,l}}(c)}\leq\left(c^2\right)^{k-l}{E_{n,k,k}}(c)
=c^{2(k-l)}G_{n-2k,k}.
\end{align}
For $n>2k,$ given any interval $A$, let's denote \begin{align}\label{dsgs}\Sigma_{n,k,A}=\{&(\lambda_1,\cdots, \lambda_n)\in\mathbb{R}^n: |\lambda_j-\lambda_{j-k}|\in A,\forall\ n-k< j\leq n\}.
\end{align}For $0\leq l\leq k$, let's denote
$$\label{dddss}{E_{n,k,l}}(A):=\int_{\Sigma_{n-l,k-l,A}}d\lambda_1\cdots d\lambda_{n-l}e^{-\sum\limits_{i=1}^{n-l}q_i\lambda_i^2/2}
\prod_{j<m}|\lambda_j-\lambda_m|^{q_jq_m}$$where ${q_s=1+\chi_{\{0<s\leq l\}}}$ and  $\Sigma_{n-l,k-l,A}$ is defined via \eqref{dsgs}. Then we have\begin{equation}\label{dldl}{{E_{n,k,0}}(A)}{}=\int_{\Sigma_{n,k,A}}|J_n(\lambda_1,\cdots, \lambda_n)|  d\lambda_1\cdots d\lambda_n
\end{equation}and \begin{equation}\label{ekn}{E_{n,k,k}}(A)=G_{n-2k,k}.\end{equation} 
With such notations, as before, we have 
 \begin{equation}\label{ea}\begin{split}\mathbb{E}\rho^{(k,n)}(A^k)=&\frac{n!}{(n-2k)!2^kG_n}\int_{\Sigma_{n,k,A/n}}|J_n(\lambda_1,\cdots, \lambda_n)| d\lambda_1\cdots d\lambda_n\\ =&\frac{n!}{(n-2k)!2^k}\frac{E_{n,k,0}(A/n)}{G_{n}}
 ,\end{split}
\end{equation}
We  also need inequalities similar to \eqref{21}.\begin{lem}\label{lem12}If $ A\subset(0,c_1)$,
$2n c_1^2\in(0,1),$ $n>2k,$ $n,k$ are positive integers, then we have \begin{align*}&(1-nc_1^2)^k\left(2\int_Audu\right)^kG_{n-2k,k}\leq{E_{n,k,0}}(A)\leq\left(2\int_Audu\right)^kG_{n-2k,k}.
\end{align*}\end{lem}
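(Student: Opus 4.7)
The plan is to mirror the iterative argument that led to \eqref{21} and \eqref{22}, but to use the set-valued inequality \eqref{sdsss} in place of the interval inequality \eqref{slt}. The upshot is that each step of peeling off a constrained pair costs exactly a factor that is bounded between $(1-nc_1^2)\cdot 2\int_A u\,du$ and $2\int_A u\,du$.

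First, I would write, for each $l\in\{1,\dots,k\}$, the integral $E_{n,k,l-1}(A)$ in the following factored form (after an appropriate reordering of variables): integrate over the $n-l-1$ remaining variables $\lambda_1,\dots,\lambda_{n-l-1}$ carrying the charges $q_s = 1+\chi_{\{s\le l-1\}}$, and then integrate over one distinguished pair $(x_1,x_2)$ which realises the $j=n-l+1$ constraint $|\lambda_j-\lambda_{j-(k-l+1)}|\in A$, i.e.\ $x_2\in x_1+A_1$ with $A_1=A\cup(-A)$. The inner double integral takes the shape
\begin{align*}
\int_{\mathbb{R}}dx_1\int_{x_1+A_1}dx_2\,|x_1-x_2|\,|F(x_1)|\,|F(x_2)|,
\end{align*}
where, as in the proof of \eqref{21}, one sets $F(x)=e^{-x^2/2}\prod_{m=1}^{n-l-1}|x-\lambda_m|^{q_m}$. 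The companion integral appearing in $E_{n,k,l}(A)$ is obtained by collapsing that pair to a single charge-$2$ particle, which leaves the inner integral $\int_{\mathbb{R}} e^{-x_1^2}\prod_m|x_1-\lambda_m|^{2q_m}\,dx_1=\int_{\mathbb{R}}|F(x_1)|^2 dx_1$.

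Next, I would apply \eqref{sdsss} with $c=c_1$ (which is legitimate since $A\subset(0,c_1)$, $2nc_1^2\in(0,1)$, and the polynomial degree of $F$ is $n-l-1<n$, so the hypothesis $m<n$ of Lemma \ref{lem10} is satisfied). This immediately gives the two-sided bound
\begin{align*}
(1-nc_1^2)\cdot 2\varphi(A)\;\le\;\frac{E_{n,k,l-1}(A)}{E_{n,k,l}(A)}\;\le\;2\varphi(A),
\end{align*}
with $\varphi(A)=\int_A u\,du$. Multiplying these bounds for $l=1,2,\dots,k$ and using the terminal identity $E_{n,k,k}(A)=G_{n-2k,k}$ from \eqref{ekn} then produces the desired inequality
\begin{align*}
(1-nc_1^2)^k\left(2\int_A u\,du\right)^k G_{n-2k,k}\;\le\;E_{n,k,0}(A)\;\le\;\left(2\int_A u\,du\right)^k G_{n-2k,k}.
\end{align*}

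The only step that demands care is the bookkeeping for the factored form of $E_{n,k,l-1}(A)$: one must verify that after permuting variables, the $(x_1,x_2)$-integral factors cleanly out of the rest, and that the outer integration over the remaining $n-l-1$ variables really does reconstitute $E_{n,k,l}(A)$ with charges $q_s=1+\chi_{\{s\le l\}}$ once the collapsed pair is reinterpreted as a single charge-$2$ particle. This is the same manipulation that worked in the derivation of \eqref{21}, so no new difficulty arises; the rest of the argument is the multiplicative telescoping described above.
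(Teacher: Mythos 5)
Your proposal is correct and follows essentially the same route as the paper's own proof: peel off one constrained pair at a time, apply \eqref{sdsss} to the inner double integral with $F(x)=e^{-x^2/2}\prod_m|x-\lambda_m|^{q_m}$, and telescope the two-sided ratio bound down to $E_{n,k,k}(A)=G_{n-2k,k}$. The only cosmetic difference is that the paper explicitly displays the factored integrals for $E_{n,k,l-1}(A)$ and $E_{n,k,l}(A)$ before invoking \eqref{sdsss}, whereas you describe the same decomposition in prose; the substance is identical.
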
\begin{proof}Let $A_1=A\cup(-A),$ after changing the order of variables, we can rewrite\begin{align*}&{E_{n,k,l-1}}(A)=\int_{\Sigma_{n-l-1,k-l,A}}\!\!\!\!d\lambda_1\cdots d\lambda_{n-l-1}e^{-\sum\limits_{i=1}^{n-l}q_i\lambda_i^2/2}
\!\!\prod_{1\leq j<m\leq n-l-1}\!\!|\lambda_j-\lambda_m|^{q_jq_m}\\ &\times\int_{\mathbb{R}}d x_1\int_{x_1+A_1}d x_2|x_1-x_2|e^{-x_1^2/2-x_2^2/2}\prod_{j=1}^2\prod_{m=1}^{n-l-1}|x_j-\lambda_m|^{q_m}\Big|_{q_s=1+\chi_{\{s\leq l-1\}}},
\end{align*}and \begin{align*}&{E_{n,k,l}}(A)=\int_{\Sigma_{n-l-1,k-l,A}}d\lambda_1\cdots d\lambda_{n-l-1}
\prod_{1\leq j<m\leq n-l-1}|\lambda_j-\lambda_m|^{q_jq_m}\\ &\times\int_{\mathbb{R}}d x_1e^{-x_1^2}\prod_{m=1}^{n-l-1}|x_1-\lambda_m|^{2q_m}\Big|_{q_s=1+\chi_{\{s\leq l-1\}}}.
\end{align*}Taking $F(x)$ as in \eqref{fsx} again, by \eqref{sdsss} we have\begin{align*}&(1-nc_1^2)\cdot2\int_Audu\leq\frac{{E_{n,k,l-1}}(A)}{{E_{n,k,l}}(A)}\leq2\int_Audu,
\end{align*}and the result follows by induction and \eqref{ekn}.
\end{proof}

\section{Proof of  Theorem  \ref{thm1}}
By Lemma \ref{lem4} and the moment method,  Theorem \ref{thm1} will be proved if we can prove 
the following convergence of the factorial moment\begin{align}\label{20}&\lim_{n\to+\infty}\mathbb{E}\left(\frac{(\widetilde{\chi}^{(n)}(A))!}{(\widetilde{\chi}^{(n)}(A)-k)!}
\right)=\left(\frac{1}{4}\int_Audu\right)^k
\end{align} for every positive integer $k$ and bounded interval $ A\subset\mathbb{R}_{+}$.
 Actually, combining Lemma \ref{lem9},  \eqref{20} is equivalent to  \begin{lem}\label{lem8}For any bounded interval $A\subset\mathbb{R}_+$ and
any positive integer $k\geq 1$,   we have\begin{align*}&
\mathbb{E}\left(\frac{(\widetilde{\chi}^{(n)}(A))!}{(\widetilde{\chi}^{(n)}(A)-k)!}\right)
-\left(\int_Audu\right)^k\frac{G_{n-2k,k}}{G_{n}}\to 0
\end{align*}as $n\to+\infty$.\end{lem}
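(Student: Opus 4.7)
The plan is to insert the auxiliary process $\rho^{(k,n)}$ as a bridge: Lemma \ref{lem14} sandwiches the factorial moment of $X_n:=\widetilde{\chi}^{(n)}(A)$ between $\rho^{(k,n)}(A^k)$ and a small error, while Lemma \ref{lem12} evaluates $\mathbb{E}\rho^{(k,n)}(A^k)$ directly in terms of $G_{n-2k,k}/G_n$. Specifically, fixing $c_0>0$ with $A\subset(0,c_0)$, \eqref{ea} gives $\mathbb{E}\rho^{(k,n)}(A^k)=\frac{n!}{(n-2k)!\,2^k}\cdot\frac{E_{n,k,0}(A/n)}{G_n}$. Applying Lemma \ref{lem12} to $A/n\subset(0,c_0/n)$ (whose hypothesis $2c_0^2/n<1$ holds for $n$ large) and using $\int_{A/n}u\,du=n^{-2}\int_{A}u\,du$ yields
\[
(1-c_0^2/n)^k\,n^{-2k}\Bigl(2\!\int_{A}\!u\,du\Bigr)^k G_{n-2k,k}\le E_{n,k,0}(A/n)\le n^{-2k}\Bigl(2\!\int_{A}\!u\,du\Bigr)^k G_{n-2k,k}.
\]
Combined with $\frac{n!}{(n-2k)!\,n^{2k}}\to 1$, this gives $\mathbb{E}\rho^{(k,n)}(A^k)=\bigl(\int_{A}u\,du\bigr)^k\,G_{n-2k,k}/G_n+o(1)$, matching the claimed main term.

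By \eqref{aaad},
\[
0\le\mathbb{E}\frac{X_n!}{(X_n-k)!}-\mathbb{E}\rho^{(k,n)}(A^k)\le k(k-1)\,\mathbb{E}\bigl[(a-1)X_n^{k-1}\bigr],
\]
so the lemma reduces to showing this error is $o(1)$. Since $(a-1)X_n^{k-1}$ vanishes off $\{a\ge 2\}$, and $\{a\ge 2\}\subset\{\widetilde{\chi}^{(n,2)}((0,2c_0))\ge 1\}$ (three consecutive eigenvalues falling inside an interval of length $2c_0/n$), Lemma \ref{lem11} applied to $(0,2c_0)$ gives the rare-event bound $\mathbb{P}(a\ge 2)\le C/n$.

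The crux is then uniform-in-$n$ moment control of $a$ and $X_n$. For $a$, the lower bound \eqref{last} applied with $A_1=(0,2c_0)$ gives $a^{2l}\le C_l+C_l\rho^{(l,n)}(A_1^l)$, and the main-term step applied to $A_1$ yields $\mathbb{E}a^{2l}=O(1)$ for every $l$. For $X_n$, I start from the lower bound \eqref{add},
\[
X_n^k\le\rho^{(k,n)}(A^k)+k(k-1)\,aX_n^{k-1},
\]
and apply a weighted Young's inequality $aX_n^{k-1}\le\varepsilon X_n^k+C_\varepsilon a^k$ with $\varepsilon$ chosen small enough (say $k(k-1)\varepsilon<1/2$) so that the $X_n^k$ term can be absorbed on the left; this yields $X_n^k\le 2\rho^{(k,n)}(A^k)+C_k' a^k$, and hence $\mathbb{E}X_n^k=O(1)$ for every $k$.

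Two applications of Cauchy--Schwarz then close the argument:
\[
\mathbb{E}\bigl[(a-1)X_n^{k-1}\mathbf{1}_{a\ge 2}\bigr]\le\bigl(\mathbb{E}(a-1)^4\bigr)^{1/4}\bigl(\mathbb{E}X_n^{4(k-1)}\bigr)^{1/4}\mathbb{P}(a\ge 2)^{1/2}=O(n^{-1/2})\to 0.
\]
The principal obstacle is the self-referential moment bound on $X_n$, since $X_n^k$ appears on both sides of \eqref{add}; the weighted-Young absorption trick is what makes the argument close, once the moments of $a$ are under control via \eqref{last} and the evaluation of $\mathbb{E}\rho^{(l,n)}(A_1^l)$.
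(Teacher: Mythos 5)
Your proposal is correct and follows essentially the same route as the paper: split via $\rho^{(k,n)}$ into a main term handled by \eqref{ea} plus Lemma~\ref{lem12} and an error term controlled through Lemma~\ref{lem14}, Lemma~\ref{lem11}, and uniform moment bounds on $a$ and $\widetilde{\chi}^{(n)}(A)$. The only differences are implementation details: you use a weighted Young absorption in place of the paper's case-dichotomy from \eqref{add} (``either $(\widetilde{\chi}^{(n)}(A))^k\le 2\rho^{(k,n)}(A^k)$ or $\le 2k(k-1)a(\widetilde{\chi}^{(n)}(A))^{k-1}$'') to get $\mathbb{E}(\widetilde{\chi}^{(n)}(A))^k=O(1)$, and you close with a three-factor H\"older bringing in $\mathbb{P}(a\ge 2)^{1/2}=O(n^{-1/2})$ directly, whereas the paper first proves the stronger $\mathbb{E}(a-1)_+^k\to 0$ (itself via a two-factor H\"older against $\mathbb{P}(a\ge 2)$) and then applies a two-factor H\"older; both variants are equivalent in substance.
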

 We will first use    Lemma \ref{lem14} to prove that\begin{align}\label{25}&\lim_{n\to+\infty}\left(\mathbb{E}\frac{(\widetilde{\chi}^{(n)}(A))!}{(\widetilde{\chi}^{(n)}(A)-k)!}
-\mathbb{E}\rho^{(k,n)}(A^k)\right)=0,
\end{align}and then use Lemma \ref{lem12} to prove that\begin{align}\label{26}&
\lim_{n\to+\infty}\left(\mathbb{E}(\rho^{(k,n)}(A^k))
-\left(\int_Audu\right)^k\frac{G_{n-2k,k}}{G_{n}}\right)=0,
\end{align}then Lemma \ref{lem8} follows from \eqref{25} and \eqref{26}, and hence we complete  the proof of Theorem \ref{thm1}.

For the rest of the article, 
  for any bounded interval  $ A\subset\mathbb{R}_{+}$, let $c_1$ be such that $A\subset(0,c_1)$, and $A_1=(0,2c_1)$, then $A\subset A_1$;  let's denote $c_n=c_1/n$, then  $ 2n (2c_n)^2=8n^{-1} c^2_1\in(0,1)$ for $n$ large enough.     By  \eqref{exde}, \eqref{22} with $l=0$ and Lemma \ref{lem9},  we have \begin{align}&\label{27}\mathbb{E}\rho^{(k,n)}(A_1^k)
=\frac{n!}{(n-2k)!2^k}\frac{E_{n,k,0}(2c_n)}{G_{n}}
\\ \nonumber\leq&\frac{n!}{(n-2k)!2^k}\frac{G_{n-2k,k}}{G_{n}}\left(2c_n\right)^{2k}\leq \frac{n^{2k}}{2^k}
2^{-2k}\left(\frac{2c_1}{n}\right)^{2k}=2^{-k}c_1^{2k}.
\end{align}Let $a$ be defined in Lemma \ref{lem14}, then we have\begin{align*}&\rho^{(k,n)}(A_1^k)\geq\frac{(a+1)!}{(a+1-2k)!2^k}\geq\frac{(a+1-2k)_+^{2k}}{2^k},
\end{align*} and hence \begin{align*}&\mathbb{E}(a+1-2k)_+^{2k}\leq 2^k\mathbb{E}\rho^{(k,n)}(A_1^k)\leq c_1^{2k},
\end{align*}here we denote $f_+:=\max(f,0).$ Since $a,k\in\mathbb{Z},\ a\geq0, \ k\geq 1$, by H\"older's inequality we have\begin{align*}&\mathbb{E}(a+1-2k)_+^{k}\leq (\mathbb{E}(a+1-2k)_+^{2k})^{\frac{1}{2}} (\mathbb{P}(a\geq 2))^{\frac{1}{2}}\leq c_1^{k} (\mathbb{P}(a\geq 2))^{\frac{1}{2}}.
\end{align*}Moreover, it's easy to check \begin{align*}&(a-1)_+\leq \max\left(2(a+1-2k)_+,(4k-4){\chi}_{a\geq 2}\right), \end{align*}and thus \begin{align*}(a-1)_+^k\leq 2^k(a+1-2k)_+^k+(4k-4)^k\chi_{a\geq 2},\end{align*} hence, we have
\begin{align*}\mathbb{E}(a-1)_+^k&\leq 2^k\mathbb{E}(a+1-2k)_+^k+(4k-4)^k\mathbb{P}(a\geq 2)\\
&\leq 2^kc_1^{k} (\mathbb{P}(a\geq 2))^{\frac{1}{2}}+(4k-4)^k\mathbb{P}(a\geq 2).
\end{align*} 
On the other hand, $a\geq 2$ is equivalent to $\widetilde{\chi}^{(n,2)}(A_1)>0, $ by Lemma \ref{lem11} we have\begin{align*}&\mathbb{P}(a\geq 2)= \mathbb{P}(\widetilde{\chi}^{(n,2)}(A_1)>0)\leq 2c^4_1/n\to 0,
\end{align*}and thus we further have \begin{align}\label{28}&\lim_{n\to+\infty}\mathbb{E}(a-1)_+^k=0.
\end{align}  By \eqref{add} in Lemma \ref{lem14} we have $$(\widetilde{\chi}^{(n)}(A))^k\leq2\rho^{(k,n)}(A^k)\,\,\mbox{or}\,\,(\widetilde{\chi}^{(n)}(A))^k\leq2k(k-1)a(\widetilde{\chi}^{(n)}(A))^{k-1},
$$ therefore,   \begin{align*}&(\widetilde{\chi}^{(n)}(A))^k\leq\max(2\rho^{(k,n)}(A^k),(2k(k-1)a)^k),
\end{align*}and thus we have \begin{align*}&\mathbb{E}(\widetilde{\chi}^{(n)}(A))^k\leq2\mathbb{E}(\rho^{(k,n)}(A^k))+(2k(k-1))^k\mathbb{E}(a^k).
\end{align*}By  \eqref{27},  \eqref{28} and the fact that $\mathbb E\rho^{(k,n)}(A^k)\leq \mathbb E\rho^{(k,n)}(A_1^k)$ since $A\subset A_1$,  we further have  \begin{align}\label{29}&\limsup_{n\to+\infty}\mathbb{E}(\widetilde{\chi}^{(n)}(A))^k<+\infty.
\end{align} Note that \eqref{25} is clearly true for $k=1$ by definitions. For $k\geq 2$, by \eqref{aaad} in Lemma \ref{lem14}, H\"older's inequality, \eqref{28} and \eqref{29}, we have \begin{align*}0&\leq\mathbb{E}\left(\frac{(\widetilde{\chi}^{(n)}(A))!}{(\widetilde{\chi}^{(n)}(A)-k)!}-\rho^{(k,n)}(A^k)\right)
\\&\leq k(k-1)\mathbb{E}((a-1)_+(\widetilde{\chi}^{(n)}(A))^{k-1})\\&\leq k(k-1)(\mathbb{E}((a-1)_+^k))^{1/k}(\mathbb{E}(\widetilde{\chi}^{(n)}(A)^{k}))^{1-1/k}\to0
\end{align*}as $n\to+\infty,$ which finishes the proof of \eqref{25}.

Now we prove \eqref{26}. By \eqref{ea} and changing of variables, we have\begin{align*}&
\mathbb{E}(\rho^{(k,n)}(A))
-\left(\int_Audu\right)^k\frac{G_{n-2k,k}}{G_{n}}\\=&
\frac{n!}{(n-2k)!2^k}\frac{{E_{n,k,0}}(A/n)}{G_{n}}-\left(\int_{A/n}udu\right)^k
\frac{n^{2k}G_{n-2k,k}}{G_{n}}\\=&
\frac{n^{2k}}{2^kG_{n}}\left({E_{n,k,0}}(A/n)-\left(2\int_{A/n}udu\right)^k
G_{n-2k,k}\right)\\&-\left(n^{2k}-\frac{n!}{(n-2k)!}\right)\frac{{E_{n,k,0}}(A/n)}{2^kG_{n}}.
\end{align*}We first notice that\begin{align*}
0&\leq n^{2k}-\frac{n!}{(n-2k)!}=n^{2k}-\prod_{j=0}^{2k-1}(n-j)=n^{2k}-n^{2k}\prod_{j=0}^{2k-1}(1-j/n)\\&\leq n^{2k}-n^{2k}\left(1-\sum_{j=0}^{2k-1}j/n\right)=n^{2k}\sum_{j=0}^{2k-1}j/n=n^{2k-1}k(2k-1). 
\end{align*}We also have $A/n\subset(0,c_1/n)$ and  $2n(c_1/n)^2\in(0,1)$ for $n$ large enough, then by \eqref{exd}, \eqref{22} and \eqref{dldl}, we have \begin{align*}
0&\leq {E_{n,k,0}}(A/n)\leq {E_{n,k,0}}(c_1/n)\leq G_{n-2k,k}(c_1/n)^{2k}.
\end{align*} Therefore,  using Lemma \ref{lem9} we have \begin{align*}
0&\leq \left(n^{2k}-\frac{n!}{(n-2k)!}\right)\frac{{E_{n,k,0}}(A/n)}{2^kG_{n}}\\&\leq n^{2k-1}k(2k-1) \frac{G_{n-2k,k}}{2^kG_{n}}(c_1/n)^{2k}\\&= n^{-1}k(2k-1)2^{-3k}c_1^{2k}.
\end{align*}By Lemma \ref{lem9} and Lemma \ref{lem12}, we have\begin{align*}&
\frac{n^{2k}}{2^kG_{n}}\left|{E_{n,k,0}}(A/n)-\left(2\int_{A/n}udu\right)^k
G_{n-2k,k}\right|\\ \leq&\frac{n^{2k}}{2^kG_{n}}(1-(1-n(c_1/n)^2)^k)\left(2\int_{A/n}udu\right)^k
G_{n-2k,k}\\ \leq&\frac{n^{2k}}{2^kG_{n}}(kn(c_1/n)^2)\left(2\int_{0}^{c_1/n}udu\right)^k
G_{n-2k,k}\\ =&\frac{n^{2k}}{2^kG_{n}}(kc_1^2/n)\left(c_1/n\right)^{2k}
G_{n-2k,k}\\=&\frac{G_{n-2k,k}}{2^kG_{n}}(kc_1^{2k+2}/n)=\frac{kc_1^{2k+2}}{2^{3k}{n}}.
\end{align*}Therefore, we have \begin{align*}&
\left|\mathbb{E}(\rho^{(k,n)}(A))
-\left(\int_Audu\right)^k\frac{G_{n-2k,k}}{G_{n}}\right| \leq\frac{kc_1^{2k+2}+k(2k-1)c_1^{2k}}{2^{3k}{n}},
\end{align*}which implies \eqref{26}. Therefore, we finish the proof of Lemma \ref{lem8} and thus the whole proof of Theorem \ref{thm1}.

\end{document}